\documentclass[12pt,reqno,twoside]{amsart}

\usepackage[T1]{fontenc}
\usepackage[utf8]{inputenc}
\usepackage{microtype}
\usepackage{cite}

\usepackage{amsmath,amssymb,amsthm,mathtools}
\usepackage{bm}
\usepackage{mathrsfs}
\usepackage{dsfont}

\usepackage{graphicx,tikz,caption,subcaption}
\usepackage{enumitem}

\usepackage[colorlinks=true,linkcolor=blue!50!black,citecolor=green!40!black,urlcolor=magenta!60!black]{hyperref}
\usepackage[nameinlink,capitalize]{cleveref}

\usepackage{bm}
\usepackage{amsbsy}

\numberwithin{equation}{section}

\allowdisplaybreaks
\theoremstyle{plain}
\newtheorem{Th}{Theorem}[section]

\theoremstyle{definition}
\newtheorem{Lem}[Th]{Lemma}

\newtheorem{Cor}[Th]{Corollary}

\begin{document}

\title[On the Nazarov--Shcheglova Conjecture]{On the Nazarov--Shcheglova Conjecture for Sharp Sobolev Inequalities: The Case $(n,p)=(3,2)$}
\author[M. Jleli]{Mohamed Jleli}
\address{(M. Jleli) Department of Mathematics, College of Science, King Saud University, Riyadh 11451, Saudi Arabia}
\email{jleli@ksu.edu.sa}

\author[B. Samet]{Bessem Samet}
\address{(B. Samet) Department of Mathematics, College of Science, King Saud University, Riyadh 11451, Saudi Arabia}
\email{bsamet@ksu.edu.sa}

\keywords{Sharp Sobolev inequality, optimal constant, extremal functions, Nazarov--Shcheglova conjecture}
\renewcommand{\subjclassname}{\textbf{2020 Mathematics Subject Classification}}
\subjclass[2020]{46E35, 26D10}

\begin{abstract}
For integers $n>k\geq0$ and $1\leq p,q\leq\infty$, let
$\lambda_3(n,k,p,q)$ denote the optimal constant in the one-dimensional
Sobolev inequality
\[
\|u^{(k)}\|_{L^q(0,1)}
\leq
\lambda_3(n,k,p,q)
\|u^{(n)}\|_{L^p(0,1)},
\qquad
u\in\mathring W_p^n(0,1).
\]
Nazarov and Shcheglova \cite{NazarovShcheglova} conjectured that
\[
\lambda_3(n,1,p,1)
=
2\lambda_3(n,0,p,\infty),
\qquad
n\geq2,\quad 1\leq p\leq\infty,
\]
and that the corresponding extremal functions coincide and are symmetric
about the midpoint of the interval. We prove this conjecture for
$(n,p)=(3,2)$ and, in particular, obtain
\[
\lambda_3(3,1,2,1)
=
\frac{1}{32\sqrt5}.
\]
The proof is based on a reduction to an operator norm problem with an
additional moment constraint. After rescaling to $(-1,1)$, this constraint
leads to orthogonality with respect to quadratic polynomials. We extend
the inverse of the second derivative through the corresponding orthogonal
projection and analyze the adjoint operator by an odd--even decomposition.
The odd component is controlled by a positive Gram kernel, while the even
component is treated by a weak-$L^2$ estimate. The equality cases yield
the characterization of all extremal functions.
\end{abstract}

\maketitle

\tableofcontents

\section{Introduction} 

The determination of sharp constants in Sobolev inequalities is a classical
problem in analysis, closely connected with the identification and qualitative
properties of the corresponding extremal functions. In the Euclidean setting,
the fundamental results of Aubin and Talenti \cite{Aubin,Talenti} established
the optimal constants in the classical Sobolev inequality and identified the
corresponding extremal functions. For a broader treatment of Sobolev-type inequalities and
their geometric and analytic aspects, we refer to \cite{SaloffCoste}. Another important development was the mass-transport approach of
Cordero-Erausquin, Nazaret, and Villani
\cite{CorderoErausquinNazaretVillani}, which yields sharp Sobolev and
Gagliardo--Nirenberg inequalities together with a characterization of the
equality cases.

In the one-dimensional setting, sharp Sobolev-type inequalities have a long
history and have been investigated from several points of view; see, for instance,
\cite{Richardson,Kalyabin,Nazarov2000,BennewitzSaito,Kalyabin2010,
KazimirovSheipak2024}. Beyond the determination of optimal constants, a
central question concerns the structure and symmetry of extremal functions;
see, for example,
\cite{BelloniKawohl,DacorognaGangboSubia,WatanabeEtAl,
SheipakGarmanova2019,BoultonLang}.  Classical examples include the
Poincar\'e, Wirtinger, and Steklov-type inequalities, while higher-order
Sobolev embeddings give rise to a considerably richer class of extremal
problems; see, for instance,
\cite{GarmanovaSheipak2023,GarmanovaSheipak2024,HindovEtAl,
KazimirovSheipak2025}. For surveys and further historical references, we
refer to \cite{KuznetsovNazarov,NazarovShcheglova}.

Adopting the notation of Nazarov and Shcheglova
\cite{NazarovShcheglova}, for integers $n>k\geq0$ and
$1\leq p,q\leq\infty$, let $\lambda_3(n,k,p,q)$ denote the best
constant in the higher-order Sobolev inequality
\begin{equation}\label{eq:lambda3}
\|u^{(k)}\|_{L^q(0,1)}
\leq
\lambda_3(n,k,p,q)\,
\|u^{(n)}\|_{L^p(0,1)},
\qquad
u\in\mathring W_p^n(0,1).
\end{equation}
Here
\[
\mathring W_p^n(0,1)
=
\left\{
u\in W_p^n(0,1):
u^{(j)}(0)=u^{(j)}(1)=0,\quad j=0,\ldots,n-1
\right\}.
\]
Exact values of $\lambda_3(n,k,p,q)$ and the structure of the
corresponding extremal functions are known only for restricted ranges
of the parameters; see \cite{NazarovShcheglova} and the references
therein.

Among the known results, the endpoint regimes $q=\infty$ and $q=1$
are particularly relevant to the present work. For $q=\infty$, sharp
constants and the corresponding extremal functions have been studied
in a number of cases; see, for example,
\cite{GarmanovaSheipak2023,GarmanovaSheipak2024,
KazimirovSheipak2024,KazimirovSheipak2025}.
In particular, Watanabe, Kametaka, Nagai, Yamagishi, and Takemura
\cite{WatanabeEtAl} determined $\lambda_3(3,0,p,\infty)$ for
$1<p<\infty$ and established the symmetry of the corresponding
extremal functions with respect to $x=\frac12$. Together with the
endpoint cases $p=1$ and $p=\infty$, this gives the determination of
$\lambda_3(3,0,p,\infty)$ for every $1\leq p\leq\infty$; see
\cite{NazarovShcheglova} and the references therein. At the endpoint
$p=1$, the right-hand side of \eqref{eq:lambda3} is understood in the
measure-valued sense.

For the endpoint $q=1$, a recent higher-order result was obtained by
Hindov, Nitzan, Olsen, and Rydhe \cite{HindovEtAl}. They proved that
\[
\lambda_3(n,0,2,1)
=
\frac{n!}{(2n)!\sqrt{2n+1}},
\]
and showed that the corresponding extremal functions are, up to
multiplication by a nonzero constant,
\[
x^n(1-x)^n.
\]

In \cite{NazarovShcheglova},  Nazarov and Shcheglova conjectured  that 
\[
\lambda_3(n,1,p,1)
=
2\lambda_3(n,0,p,\infty),
\qquad
n\geq2,\quad 1\leq p\leq\infty.
\]
Moreover, the corresponding extremal functions
coincide and are symmetric with respect to $x=\frac12$.
The conjecture is known to hold for $n=2$; see
\cite[Remark~4.15]{NazarovShcheglova}.

In the present paper, we prove the Nazarov--Shcheglova conjecture in the
case $(n,p)=(3,2)$. For $p=2$, the result of Watanabe et al.
\cite{WatanabeEtAl} gives
\begin{equation}\label{eq:constant-WatanabeEtAl}
\lambda_3(3,0,2,\infty)
=
\frac{1}{64\sqrt5}.
\end{equation}
Thus, in this case, the conjecture reduces to proving that
\[
\lambda_3(3,1,2,1)
=
\frac{1}{32\sqrt5},
\]
together with the asserted coincidence and symmetry of the corresponding
extremal functions.

Our main result is the following.

\begin{Th}\label{thm:main}
The sharp constant in \eqref{eq:lambda3} for
\[
(n,k,p,q)=(3,1,2,1)
\]
is given by
\begin{equation}\label{eq:sharp-cst}
\lambda_3(3,1,2,1)
=
\frac{1}{32\sqrt5}.
\end{equation}
Moreover, the families of extremal functions for
$\lambda_3(3,1,2,1)$ and $\lambda_3(3,0,2,\infty)$ coincide
up to multiplication by a nonzero constant, and every such extremal
function is symmetric about $x=\frac12$.
\end{Th}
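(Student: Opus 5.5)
The plan is to split \eqref{eq:sharp-cst} into an elementary lower bound and a hard upper bound, and then read the extremal functions off the equality cases.

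\textbf{Lower bound.} For every $u\in\mathring W_2^3(0,1)$ and every $x_0\in(0,1)$ we have
\[
2|u(x_0)|\le\int_0^{x_0}|u'|+\int_{x_0}^1|u'|=\|u'\|_{L^1(0,1)},
\]
because $u(0)=u(1)=0$. Hence $\|u'\|_{L^1}\ge 2\|u\|_{L^\infty}$, and so $\lambda_3(3,1,2,1)\ge 2\lambda_3(3,0,2,\infty)=\frac{1}{32\sqrt5}$ by \eqref{eq:constant-WatanabeEtAl}. Equality in this chain forces $u$ to be monotone on $(0,x_0)$ and on $(x_0,1)$, with $|u|$ maximal at $x_0$. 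This observation will be reused in the last step.

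\textbf{Reduction of the upper bound.} Set $v=u'$ and $f=v''=u'''\in L^2(0,1)$. The boundary conditions give $v(0)=v(1)=v'(0)=v'(1)=0$, and $\int_0^1 v=u(1)-u(0)=0$. After integrating by parts against $1$, $x$ and $x^2$, these conditions become exactly $f\perp\mathcal P_2$, the quadratic polynomials. Conversely, every $f\in L^2$ with $f\perp\mathcal P_2$ arises from some admissible $u$.

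Let $Tf(x)=\int_0^x(x-t)f(t)\,dt$. Then $v=Tf$, and by duality
\[
\|v\|_{L^1}=\sup_{\|g\|_\infty\le1}\langle f,T^*g\rangle .
\]
Therefore
\[
\lambda_3(3,1,2,1)=\sup_{\|g\|_\infty\le1}\|P\,T^*g\|_{L^2},
\]
where $P$ is the orthogonal projection onto $\mathcal P_2^\perp$. The operator $PT^*$ does not depend on the choice of right inverse of $d^2/dx^2$, since any two choices differ by linear polynomials. I will rescale to $(-1,1)$ and choose $T$ to commute with the reflection $x\mapsto -x$.

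Now split $g=g_o+g_e$ into odd and even parts. Then $PT^*g_o$ and $PT^*g_e$ have opposite parities, hence are orthogonal, and
\[
\|PT^*g\|^2=\|PT^*g_o\|^2+\|PT^*g_e\|^2 .
\]

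\textbf{Main estimate.} The target is to show $\|PT^*g\|^2\le \frac{1}{5\cdot 32^2}$ (times the rescaling factor) for all $|g|\le1$. The candidate maximizer is $g=\operatorname{sign}(x)$ on $(-1,1)$, which is odd and corresponds to $u'$ changing sign once, at the midpoint.

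For the odd part, $\|PT^*g_o\|^2=\iint K(s,t)g_o(s)g_o(t)\,ds\,dt$ with an explicit piecewise polynomial Gram kernel $K$ on $(0,1)^2$. I will try to prove $K\ge0$ pointwise. Then the supremum over $|g_o|\le1$ is attained at $g_o=\operatorname{sign}$, with value $\iint K$, which should equal $\lambda^2$ exactly.

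The difficulty is that $g_o$ and $g_e$ are coupled: $|g_o|+|g_e|\le1$ pointwise. So the odd bound alone is not enough. I need a refined estimate of the form
\[
\|PT^*g_o\|^2\le\lambda^2-c\,\|g_e\|_{L^1},
\]
obtained from $|g_o|\le 1-|g_e|$ together with $K>0$, combined with a bound $\|PT^*g_e\|^2\le C\|g_e\|_{L^1}\|g_e\|_\infty$. For the even part I would try a weak-$L^2$ or Schur-type estimate exploiting that the even kernel is small near the endpoints. Finally one checks $C\le c$. This matching of constants is the main obstacle, and I expect it to require explicit polynomial computations of $K$ and of the even kernel.

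\textbf{Equality cases.} If equality holds, the estimates above force $g_e=0$ and $g_o=\pm\operatorname{sign}$ almost everywhere where $K>0$. Then $f$ must be a multiple of $PT^*\operatorname{sign}$, which determines $u$ up to a constant multiple. This $u$ is symmetric about $\tfrac12$ and monotone on each half. By the lower-bound discussion, $\|u'\|_1=2\|u\|_\infty$, so $u$ is also extremal for $\lambda_3(3,0,2,\infty)$.

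Conversely, since $2\lambda_3(3,0,2,\infty)=\lambda_3(3,1,2,1)$, any extremal for $\lambda_3(3,0,2,\infty)$ satisfies
\[
\|u'\|_1\ge 2\|u\|_\infty=\lambda_3(3,1,2,1)\|u'''\|_2,
\]
so it is extremal for $\lambda_3(3,1,2,1)$ as well. Hence the two families of extremal functions coincide, and all of them are symmetric about $x=\tfrac12$.
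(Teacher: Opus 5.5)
Your architecture is the paper's. Your $PT^*$ is exactly the paper's $A^*=(I-\Pi_2)T^*$, your parity splitting is Lemma~\ref{lem:adjoint-decomposition}, your odd kernel is (up to normalization) the paper's $K_-$, and your coupling $|g_o|+|g_e|\le1$ is \eqref{eq:est-mod-ag-bg}. The gap is that the upper bound, which carries the whole sharp constant, is not proved. You leave the even-part estimate and the ``matching of constants'' open, and the form in which you pose them cannot work.

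An inequality $\|PT^*g_o\|^2\le\lambda^2-c\|g_e\|_{L^1}$ with a uniform $c>0$ is false. The loss forced on the odd part by $|g_o|\le1-|g_e|$ is weighted by $\kappa_-(x)=\int_0^1K_-(x,y)\,dy=x(1-x)^2(2x+1)/48$, which vanishes at $x=0$. Concretely, in the normalization of $A^*$ on $(-1,1)$, take $g_e=\chi_{\{|x|<\varepsilon\}}$ and the best admissible odd part $g_o=\operatorname{sgn}(x)\chi_{\{|x|>\varepsilon\}}$. The odd part then loses only $4W(\varepsilon)+O(\varepsilon^4)\approx\varepsilon^2/24$, while $\|g_e\|_{L^1}=2\varepsilon$. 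The even part itself is $\approx2\kappa_+(0)^2\varepsilon^2=\varepsilon^2/96$, also quadratic in $\varepsilon$, so a bound $C\|g_e\|_{L^1}\|g_e\|_\infty\sim\varepsilon$ says nothing here.

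The critical region is $x\to0$. There the even kernel is \emph{not} small, since $K_+(x,x)=\kappa_+(x)^2\to1/192$, while $\kappa_-\to0$. So your heuristic that the even kernel is small near the endpoints points the wrong way, and a Schur-type bound with row sums $\int_0^1|K_+(x,y)|\,dy$ controlled by $\kappa_-(x)$ is impossible.

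What is needed is weighted bookkeeping with exactly matching constants. From $K_->0$ and $2a(x)a(y)\le a(x)^2+a(y)^2$, the odd part is at most $2\int_0^1\kappa_-a^2\le2\int_0^1\kappa_-(1-|b|)$. One must then prove
\[
\Bigl\|\int_0^1 b(x)\bigl(r_x+r_{-x}\bigr)\,dx\Bigr\|_{L^2(-1,1)}^2<2\|b\|_{L^\infty(0,1)}\int_0^1\kappa_-(x)|b(x)|\,dx,\qquad b\neq0,
\]
with no slack in the constant. The paper gets this from the triangle inequality, which reduces it to $\bigl(\int_0^1\kappa_+h\bigr)^2<\int_0^1\kappa_-h$ for $0\le h\le1$, $h\not\equiv0$ (Lemma~\ref{lem:even-estimate}). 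That lemma follows from the weak-type bound $\mu(\{\rho>\lambda\})<1/(4\lambda^2)$ for $\rho=\kappa_+/\kappa_-$ and $d\mu=\kappa_-\,dx$. This bound in turn rests on the polynomial inequalities $\rho^2W<1/8$ on $(0,\frac12]$ and $\rho^2(W(1)-W)<1/8$ on $[\frac12,1)$, certified by Bernstein coefficients. The constant $1/8$ is attained in the limit $x\to0$, which shows how delicate this step is.

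The \emph{strictness} of the even estimate is also what forces $g_e=0$ in your equality analysis, which you currently assume.

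On the positive side, your converse argument via $\|u'\|_{L^1}\ge2\|u\|_{L^\infty}$ is correct. Once the constant and uniqueness for $\lambda_3(3,1,2,1)$ are known, it shows every extremal for $\lambda_3(3,0,2,\infty)$ is extremal for $\lambda_3(3,1,2,1)$. This makes the paper's separate uniqueness proof through $\sigma_x$ and $\|\sigma_x\|^2=x^5(1-x)^5/20$ unnecessary.
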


Our proof is inspired in part by the left-inverse approach developed by
Hindov, Nitzan, Olsen, and Rydhe \cite{HindovEtAl} for the sharp embedding
of $W_0^{k,2}(-1,1)$ into $L^1(-1,1)$. In the present problem, however,
the reduction $v=u'$ introduces the additional moment condition
\[
\int_0^1 v(x)\,dx=0.
\]
After rescaling to $(-1,1)$ by
\[
V(t)=v\left(\frac{t+1}{2}\right),
\qquad -1<t<1,
\]
the boundary and moment conditions imply
\[
V''\perp\mathcal P_2,
\qquad
\mathcal P_2=\operatorname{span}\{1,t,t^2\}.
\]
Indeed, the boundary conditions yield the orthogonality of $V''$ to
$1$ and $t$, while the zero-moment condition yields the orthogonality
to $t^2$. This additional constraint prevents the fixed-sign kernel
argument used in \cite{HindovEtAl} from applying directly.

To overcome this difficulty, we extend the inverse of the second
derivative from $\mathcal P_2^\perp$ to $L^2(-1,1)$ by composing it
with the orthogonal projection onto $\mathcal P_2^\perp$. We then
analyze the adjoint of the resulting operator through an odd--even
decomposition. The odd component is controlled by an explicit positive
kernel, whereas the even component requires a weak-$L^2$ estimate.
This yields both the sharp constant and the characterization of the
extremal functions.

The rest of the paper is organized as follows. In Section~\ref{sec2},
we develop the reductions, construct the relevant inverse operator, and
establish the kernel estimates needed in the sequel. In Section~\ref{sec3},
we prove Theorem~\ref{thm:main}.

\section{Preliminaries}\label{sec2}

\subsection{Notation}

We first fix some notation. Throughout the paper, $\operatorname{sgn}$ denotes
the sign function,
\[
\operatorname{sgn}(x)
=
\begin{cases}
1, & x>0,\\
0, & x=0,\\
-1, & x<0.
\end{cases}
\]

For $s\in\mathbb R$, let
\[
s_+=\max\{s,0\}.
\]
For each $x\in(-1,1)$, define
\begin{equation}\label{eq:phi-x}
\phi_x(t)=(x-t)_+,
\qquad -1<t<1.
\end{equation}

We denote by
\[
\langle f,h\rangle_{L^2(-1,1)}
=
\int_{-1}^1 f(t)h(t)\,dt
\]
the inner product in $L^2(-1,1)$.

We introduce the space
\begin{equation}\label{eq:X}
X
=
\left\{
V\in H_0^2(-1,1):
\int_{-1}^1 V(t)\,dt=0
\right\},
\end{equation}
and set
\begin{equation}\label{eq:Cstar}
C_*
=
\sup_{V\in X\setminus\{0\}}
\frac{\|V\|_{L^1(-1,1)}}
{\|V''\|_{L^2(-1,1)}}.
\end{equation}

We set
\[
\mathcal P_2
=
\operatorname{span}\{1,t,t^2\},
\]
the space of real polynomials of degree at most two, viewed as a subspace
of $L^2(-1,1)$. We denote by
\[
\Pi_2:L^2(-1,1)\longrightarrow\mathcal P_2
\]
the orthogonal projection onto $\mathcal P_2$, and by $I$ the identity
operator on $L^2(-1,1)$.

For each $x\in(-1,1)$, define
\begin{equation}\label{eq:rx}
r_x=(I-\Pi_2)\phi_x
\in\mathcal P_2^\perp,
\end{equation}
where $\mathcal P_2^\perp$ denotes the orthogonal complement of
$\mathcal P_2$ in $L^2(-1,1)$.

For $f\in L^2(-1,1)$, set
\begin{equation}\label{eq:A}
(Af)(x)
=
\int_{-1}^1 r_x(t)f(t)\,dt,
\qquad -1<x<1.
\end{equation}

\begin{Lem}\label{lem:A-bounded}
The operator
\[
A:L^2(-1,1)\longrightarrow L^1(-1,1)
\]
defined by \eqref{eq:A} is linear and bounded.
\end{Lem}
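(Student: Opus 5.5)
Linearity of $A$ is immediate, since for fixed $x$ the map $f\mapsto\langle r_x,f\rangle_{L^2(-1,1)}$ is linear. So the work is to show that $Af$ is a well-defined measurable function on $(-1,1)$ and that $\|Af\|_{L^1(-1,1)}\le C\|f\|_{L^2(-1,1)}$. The plan is to bound $\|r_x\|_{L^2(-1,1)}$ uniformly in $x$ and then use Cauchy--Schwarz pointwise.

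\emph{Step 1: uniform bound on $r_x$.} For every $x\in(-1,1)$ and $t\in(-1,1)$ we have $0\le\phi_x(t)=(x-t)_+\le 2$. Hence $\|\phi_x\|_{L^2(-1,1)}\le 2\sqrt2$. Since $I-\Pi_2$ is an orthogonal projection, its norm is at most one, so
\[
\|r_x\|_{L^2(-1,1)}\le\|\phi_x\|_{L^2(-1,1)}\le 2\sqrt2 .
\]
By Cauchy--Schwarz, $|(Af)(x)|\le 2\sqrt2\,\|f\|_{L^2(-1,1)}$ for every $x$.

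\emph{Step 2: measurability of $Af$.} Write $\Pi_2$ through an orthonormal basis $\{e_0,e_1,e_2\}$ of $\mathcal P_2$, for instance the normalized Legendre polynomials. Then
\[
r_x(t)=\phi_x(t)-\sum_{j=0}^2\langle \phi_x,e_j\rangle_{L^2(-1,1)}\,e_j(t),
\]
and consequently
\[
(Af)(x)=\int_{-1}^x (x-t)f(t)\,dt-\sum_{j=0}^2\langle\phi_x,e_j\rangle_{L^2(-1,1)}\langle e_j,f\rangle_{L^2(-1,1)}.
\]
The first term is continuous in $x$, because $f\in L^1(-1,1)$ and dominated convergence applies. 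Each coefficient $\langle\phi_x,e_j\rangle_{L^2(-1,1)}$ is a polynomial in $x$. Hence $Af$ is continuous, and in particular measurable. Equivalently, $x\mapsto r_x$ is continuous into $L^2(-1,1)$, since $\|\phi_x-\phi_y\|_{L^2(-1,1)}\le\sqrt2\,|x-y|$.

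\emph{Step 3: the $L^1$ bound.} Integrating the bound from Step 1 over an interval of length two gives
\[
\|Af\|_{L^1(-1,1)}\le 4\sqrt2\,\|f\|_{L^2(-1,1)}.
\]
So $A$ is bounded, in fact even into $L^\infty$ and into $C([-1,1])$.

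I do not expect any serious obstacle here. The only point that needs care is measurability, and Step 2 settles it by making the continuity of $x\mapsto r_x$ explicit.
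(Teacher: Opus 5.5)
Your proof is correct and follows the paper's argument: bound $\|r_x\|_{L^2(-1,1)}\le\|\phi_x\|_{L^2(-1,1)}$ using the contractivity of $I-\Pi_2$, apply Cauchy--Schwarz pointwise in $x$, and integrate over $(-1,1)$. The only differences are that the paper uses the exact value $\|\phi_x\|_{L^2(-1,1)}=((x+1)^3/3)^{1/2}\le\sqrt{8/3}$ where you use the cruder bound $2\sqrt2$, and that your continuity argument for the measurability of $Af$ is a detail the paper leaves implicit.
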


\begin{proof}
Linearity is immediate. Since $I-\Pi_2$ is the orthogonal projection
onto $\mathcal P_2^\perp$,
\begin{equation}\label{est:norm-rx}
\|r_x\|_{L^2(-1,1)}
\leq
\|\phi_x\|_{L^2(-1,1)}
=
\left(\frac{(x+1)^3}{3}\right)^{1/2}
\leq
\sqrt{\frac83},
\qquad -1<x<1.
\end{equation}
Hence, by the Cauchy--Schwarz inequality,
\[
|(Af)(x)|
\leq
\sqrt{\frac83}\,
\|f\|_{L^2(-1,1)},
\qquad -1<x<1.
\]
Therefore,
\[
\|Af\|_{L^1(-1,1)}
\leq
2\sqrt{\frac83}\,
\|f\|_{L^2(-1,1)},
\]
and the result follows.
\end{proof}

We denote by
\[
A^*:L^\infty(-1,1)\longrightarrow L^2(-1,1)
\]
the Banach adjoint of $A$, where $L^2(-1,1)$ is identified
isometrically with its dual in the usual way. By
\eqref{est:norm-rx} and Fubini's theorem, for every
$g\in L^\infty(-1,1)$,
\begin{equation}\label{eq:A-star}
(A^*g)(y)
=
\int_{-1}^1 g(x)r_x(y)\,dx
\end{equation}
for almost every $y\in(-1,1)$.

We denote the corresponding operator norms by
\begin{equation}\label{eq:norms}
\begin{aligned}
\|A\|_{2\to1}
&=
\sup_{f\in L^2(-1,1)\setminus\{0\}}
\frac{\|Af\|_{L^1(-1,1)}}
{\|f\|_{L^2(-1,1)}},
\\
\|A^*\|_{\infty\to2}
&=
\sup_{g\in L^\infty(-1,1)\setminus\{0\}}
\frac{\|A^*g\|_{L^2(-1,1)}}
{\|g\|_{L^\infty(-1,1)}}.
\end{aligned}
\end{equation}

We introduce the reflection operators
\begin{equation}\label{eq:R2}
R_2:L^2(-1,1)\longrightarrow L^2(-1,1),
\qquad
(R_2f)(t)=f(-t),
\end{equation}
and
\begin{equation}\label{eq:Rinfty}
R_\infty:L^\infty(-1,1)\longrightarrow L^\infty(-1,1),
\qquad
(R_\infty g)(t)=g(-t),
\end{equation}
with the identities understood almost everywhere on $(-1,1)$.

Finally, define
\begin{equation}\label{eq:D2}
D^2:X\longrightarrow L^2(-1,1),
\qquad
D^2V=V''.
\end{equation}

\subsection{Reduction to an operator norm problem}

We first reduce the computation of $\lambda_3(3,1,2,1)$ to the
determination of the operator norm of $A$.

\begin{Lem}\label{lem:reduction}
The following assertions hold.
\begin{enumerate}
\item[\rm(i)]
The operator $D^2$ defined by \eqref{eq:D2} maps $X$ bijectively onto
$\mathcal P_2^\perp$, and
\[
\left.A\right|_{\mathcal P_2^\perp}
=
(D^2)^{-1}.
\]

\item[\rm(ii)]
The constant $C_*$ defined by \eqref{eq:Cstar} satisfies
\[
C_*
=
\|A\|_{2\to1}
=
\|A^*\|_{\infty\to2}.
\]

\item[\rm(iii)]
The map
\[
T:\mathring W_2^3(0,1)\longrightarrow X,
\qquad
(Tu)(t)
=
u'\left(\frac{t+1}{2}\right),
\]
is a linear bijection, and
\begin{equation}\label{eq:rescaling}
\frac{\|u'\|_{L^1(0,1)}}
{\|u'''\|_{L^2(0,1)}}
=
\frac{1}{4\sqrt2}
\frac{\|Tu\|_{L^1(-1,1)}}
{\|(Tu)''\|_{L^2(-1,1)}},
\qquad
u\in\mathring W_2^3(0,1)\setminus\{0\}.
\end{equation}

\item[\rm(iv)]
Consequently,
\[
\lambda_3(3,1,2,1)
=
\frac{C_*}{4\sqrt2},
\]
and $u$ is an extremal function for $\lambda_3(3,1,2,1)$ if and only
if $Tu$ is an extremal function for \eqref{eq:Cstar}.
\end{enumerate}
\end{Lem}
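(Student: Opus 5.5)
I will prove the four assertions in order; each one feeds into the next.

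For (i), take $V\in X$ and set $f=V''\in L^2(-1,1)$. Integration by parts with $V(\pm1)=V'(\pm1)=0$ gives
\[
\int_{-1}^1 f=V'(1)-V'(-1)=0,
\qquad
\int_{-1}^1 tf=0,
\qquad
\int_{-1}^1 t^2f=2\int_{-1}^1 V=0 .
\]
Hence $f\in\mathcal P_2^\perp$. Injectivity is immediate: if $V''=0$ with $V(-1)=V'(-1)=0$, then $V\equiv0$.

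For surjectivity and the identification of the inverse, take $f\in\mathcal P_2^\perp$ and define
\[
V(x)=\int_{-1}^1 (x-t)_+f(t)\,dt=\int_{-1}^x (x-t)f(t)\,dt .
\]
Then $V''=f$ and $V(-1)=V'(-1)=0$. At $x=1$ we get $V(1)=\int(1-t)f=0$ and $V'(1)=\int f=0$. Also $\int_{-1}^1 V=\tfrac12\int(1-t)^2f=0$, so $V\in X$. Finally, because $\Pi_2$ is self-adjoint,
\[
\langle r_x,f\rangle=\langle\phi_x,(I-\Pi_2)f\rangle=\langle\phi_x,f\rangle .
\]
This shows $Af=V$, that is, $A|_{\mathcal P_2^\perp}=(D^2)^{-1}$.

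For (ii), the key observation is that $A\Pi_2=0$, since $r_x\in\mathcal P_2^\perp$ for every $x$. Therefore $Af=A(I-\Pi_2)f$ and $\|(I-\Pi_2)f\|_2\le\|f\|_2$. Consequently the supremum defining $\|A\|_{2\to1}$ can be restricted to $f\in\mathcal P_2^\perp$. By (i), writing $f=V''$ turns this restricted supremum into exactly the supremum in \eqref{eq:Cstar}, so $C_*=\|A\|_{2\to1}$.

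The equality $\|A\|=\|A^*\|$ is the general Banach-space fact for the adjoint $A^*:(L^1)^*\to(L^2)^*$. Here $(L^1)^*=L^\infty$, which is legitimate because $(-1,1)$ is $\sigma$-finite, and $L^2$ is self-dual. The only point needing care is that the concrete formula \eqref{eq:A-star} really is the Banach adjoint. This follows from Fubini's theorem, justified by the uniform bound \eqref{est:norm-rx}.

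For (iii), let $u\in\mathring W_2^3(0,1)$ and put $V=Tu$. Then $V\in H^2(-1,1)$ with $V''(t)=\tfrac14u'''((t+1)/2)$. The conditions $V(\pm1)=V'(\pm1)=0$ come from $u'(0)=u'(1)=u''(0)=u''(1)=0$. Moreover
\[
\int_{-1}^1 V=2\int_0^1u'=2\bigl(u(1)-u(0)\bigr)=0,
\]
so $V\in X$. For the inverse, given $V\in X$ set $u(x)=\int_0^xV(2s-1)\,ds$. Then $u(0)=0$, and $u(1)=0$ by the moment condition; the derivative conditions follow from those on $V$. Hence $T$ is a bijection.

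The rescaling identity is a change of variables with $x=(t+1)/2$, so $dx=dt/2$:
\[
\|u'\|_{L^1(0,1)}=\tfrac12\|V\|_{L^1(-1,1)},
\]
\[
\|u'''\|_{L^2(0,1)}=\Bigl(\int_{-1}^1 16\,|V''|^2\,\tfrac{dt}{2}\Bigr)^{1/2}=2\sqrt2\,\|V''\|_{L^2(-1,1)} .
\]
Dividing gives the factor $\tfrac12\cdot\tfrac{1}{2\sqrt2}=\tfrac1{4\sqrt2}$, as claimed.

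Part (iv) then follows at once. Because $T$ is a bijection, taking suprema on both sides of \eqref{eq:rescaling} gives $\lambda_3(3,1,2,1)=C_*/(4\sqrt2)$. The same identity shows that $u$ attains the supremum exactly when $Tu$ does, which is the stated correspondence of extremal functions.

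The only mildly delicate step is identifying \eqref{eq:A-star} with the Banach adjoint in (ii); everything else is direct computation.
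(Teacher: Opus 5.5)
Your proposal is correct and follows essentially the same route as the paper: integration by parts gives $D^2(X)=\mathcal P_2^\perp$ with explicit inverse $\int_{-1}^x(x-t)f(t)\,dt$, the identity $Af=A(I-\Pi_2)f$ yields (ii), and the change of variables $x=(t+1)/2$ yields (iii)--(iv). The only step you leave implicit is the injectivity of $T$, i.e.\ that $u\mapsto\int_0^x V(2s-1)\,ds$ is also a left inverse (this uses $u(0)=0$), which the paper checks directly by showing $\ker T=\{0\}$.
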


\begin{proof}
(i) Let $V\in X$. By \eqref{eq:X},
\[
V(-1)=V(1)=V'(-1)=V'(1)=0
\]
and
\[
\int_{-1}^1 V(t)\,dt=0.
\]
Integration by parts gives
\[
\int_{-1}^1 V''(t)\,dt=0,
\qquad
\int_{-1}^1 tV''(t)\,dt=0,
\qquad
\int_{-1}^1 t^2V''(t)\,dt
=
2\int_{-1}^1V(t)\,dt
=
0.
\]
Hence
\[
V''\in\mathcal P_2^\perp,
\]
and therefore
\[
D^2(X)\subset\mathcal P_2^\perp.
\]

Conversely, let $f\in\mathcal P_2^\perp$ and define
\[
V(t)
=
\int_{-1}^t (t-s)f(s)\,ds,
\qquad -1\leq t\leq1.
\]
Then $V\in H^2(-1,1)$,
\[
V'(t)
=
\int_{-1}^t f(s)\,ds,
\qquad
V''(t)=f(t)
\quad\text{a.e. in }(-1,1),
\]
and
\[
V(-1)=V'(-1)=0.
\]
Since $f\perp\mathcal P_2$,
\[
V'(1)
=
\int_{-1}^1 f(s)\,ds
=
0,
\qquad
V(1)
=
\int_{-1}^1(1-s)f(s)\,ds
=
0.
\]
Thus
\[
V\in H_0^2(-1,1),
\qquad
D^2V=f.
\]
Moreover, by Fubini's theorem,
\[
\begin{aligned}
\int_{-1}^1V(t)\,dt
&=
\int_{-1}^1
\int_{-1}^t (t-s)f(s)\,ds\,dt\\
&=
\frac12
\int_{-1}^1(1-s)^2f(s)\,ds\\
&=
0,
\end{aligned}
\]
since $(1-s)^2\in\mathcal P_2$. Hence $V\in X$.

If $V\in X$ and $D^2V=0$, then $V$ is affine, and the boundary
conditions in \eqref{eq:X} imply $V\equiv0$. Thus $D^2$ is injective.
Consequently,
\[
D^2:X\longrightarrow\mathcal P_2^\perp
\]
is bijective.

Finally, let $f\in\mathcal P_2^\perp$. By \eqref{eq:rx} and
\eqref{eq:A},
\[
\begin{aligned}
(Af)(x)
&=
\int_{-1}^1 r_x(t)f(t)\,dt\\
&=
\int_{-1}^1 \phi_x(t)f(t)\,dt\\
&=
\int_{-1}^x (x-t)f(t)\,dt,
\end{aligned}
\]
since $\Pi_2\phi_x\in\mathcal P_2$ and
$f\perp\mathcal P_2$. Comparing with the preceding construction,
we obtain
\[
Af=(D^2)^{-1}f.
\]
Therefore,
\[
\left.A\right|_{\mathcal P_2^\perp}
=
(D^2)^{-1}.
\]

\noindent (ii) By \eqref{eq:Cstar} and (i),
\[
C_*
=
\sup_{f\in\mathcal P_2^\perp\setminus\{0\}}
\frac{\|Af\|_{L^1(-1,1)}}
{\|f\|_{L^2(-1,1)}}.
\]
Moreover, by \eqref{eq:rx} and \eqref{eq:A},
\[
Af=A(I-\Pi_2)f,
\qquad
f\in L^2(-1,1).
\]
Hence
\[
\|Af\|_{L^1(-1,1)}
\leq
C_*\|(I-\Pi_2)f\|_{L^2(-1,1)}
\leq
C_*\|f\|_{L^2(-1,1)},
\]
and therefore
\[
\|A\|_{2\to1}\leq C_*.
\]
Conversely, since $\mathcal P_2^\perp\subset L^2(-1,1)$,
\[
C_*
\leq
\sup_{f\in L^2(-1,1)\setminus\{0\}}
\frac{\|Af\|_{L^1(-1,1)}}
{\|f\|_{L^2(-1,1)}}
=
\|A\|_{2\to1}.
\]
Thus
\[
C_*=\|A\|_{2\to1}.
\]
Finally, the equality of the norms of a bounded operator and its Banach
adjoint gives
\[
\|A\|_{2\to1}
=
\|A^*\|_{\infty\to2}.
\]
This proves (ii).

\noindent (iii) Let $u\in\mathring W_2^3(0,1)$ and set
\[
V=Tu,
\qquad
V(t)=u'\left(\frac{t+1}{2}\right).
\]
Then $V\in H^2(-1,1)$ and
\[
V'(t)
=
\frac12u''\left(\frac{t+1}{2}\right).
\]
Since
\[
u'(0)=u'(1)=u''(0)=u''(1)=0,
\]
we have
\[
V(\pm1)=V'(\pm1)=0,
\]
and hence
\[
V\in H_0^2(-1,1).
\]
Moreover, by the change of variables $x=(t+1)/2$,
\[
\int_{-1}^1V(t)\,dt
=
2\int_0^1u'(x)\,dx
=
2\bigl(u(1)-u(0)\bigr)
=
0.
\]
Therefore,
\[
Tu=V\in X.
\]

Conversely, let $V\in X$ and define
\[
u(x)
=
\int_0^x V(2s-1)\,ds,
\qquad 0\leq x\leq1.
\]
Then
\[
u'(x)=V(2x-1),
\qquad
u''(x)=2V'(2x-1),
\qquad
u'''(x)=4V''(2x-1),
\]
and hence $u\in W_2^3(0,1)$. Since $V\in H_0^2(-1,1)$,
\[
u'(0)=u'(1)=u''(0)=u''(1)=0.
\]
Moreover,
\[
u(0)=0
\]
and, by the defining moment condition of $X$,
\[
u(1)
=
\int_0^1V(2s-1)\,ds
=
\frac12\int_{-1}^1V(t)\,dt
=
0.
\]
Hence
\[
u\in\mathring W_2^3(0,1),
\]
and
\[
(Tu)(t)
=
u'\left(\frac{t+1}{2}\right)
=
V(t).
\]
Thus $T$ is surjective.

Suppose that $Tu=0$ for some
$u\in\mathring W_2^3(0,1)$. Since $u'$ is continuous,
\[
u'(x)=0,
\qquad 0\leq x\leq1.
\]
Thus $u$ is constant, and since $u(0)=0$, we obtain $u=0$.
Hence
\[
\ker T=\{0\}.
\]
Since $T$ is linear, it is injective. Consequently, $T$ is a
bijection from $\mathring W_2^3(0,1)$ onto $X$.

Finally, for $u\in\mathring W_2^3(0,1)\setminus\{0\}$, a change of
variables gives
\[
\|Tu\|_{L^1(-1,1)}
=
2\|u'\|_{L^1(0,1)}
\]
and
\[
\|(Tu)''\|_{L^2(-1,1)}
=
\frac{1}{2\sqrt2}\,
\|u'''\|_{L^2(0,1)}.
\]
This yields \eqref{eq:rescaling} and completes the proof of (iii).

\noindent (iv) By the definition of the optimal constant in
\eqref{eq:lambda3},
\[
\lambda_3(3,1,2,1)
=
\sup_{u\in\mathring W_2^3(0,1)\setminus\{0\}}
\frac{\|u'\|_{L^1(0,1)}}
{\|u'''\|_{L^2(0,1)}}.
\]
Since $T$ is a bijection from
$\mathring W_2^3(0,1)$ onto $X$, taking the supremum in
\eqref{eq:rescaling} gives
\[
\lambda_3(3,1,2,1)
=
\frac{C_*}{4\sqrt2}.
\]
Moreover, by \eqref{eq:rescaling} and the bijectivity of $T$, the
left-hand quotient attains its supremum at $u$ if and only if the
right-hand quotient attains its supremum at $Tu$. Hence $u$ is an
extremal function for $\lambda_3(3,1,2,1)$ if and only if $Tu$ is an
extremal function for \eqref{eq:Cstar}. This proves (iv).
\end{proof}

\subsection{Reflection symmetry and decomposition of the adjoint}

We next exploit the reflection symmetry of the family
$\{r_x\}_{x\in(-1,1)}$ and use it to decompose the adjoint operator
$A^*$ into its odd and even components.

\begin{Lem}\label{lem:reflection}
The reflection operator $R_2$ defined by \eqref{eq:R2} has the
following properties.
\begin{enumerate}
\item[\rm(i)]
The operator $R_2$ is an orthogonal involution on $L^2(-1,1)$.
In particular,
\[
R_2^*=R_2^{-1}=R_2,
\qquad
R_2^2=I,
\]
and
\[
\|R_2f\|_{L^2(-1,1)}
=
\|f\|_{L^2(-1,1)},\qquad f\in L^2(-1,1).
\]

\item[\rm(ii)]
The subspaces $\mathcal P_2$ and $\mathcal P_2^\perp$ are invariant
under $R_2$, and
\[
\Pi_2R_2=R_2\Pi_2.
\]

\item[\rm(iii)]
The family $\{r_x\}_{x\in(-1,1)}$ defined by \eqref{eq:rx} satisfies
\[
r_{-x}=R_2r_x,
\qquad x\in(-1,1).
\]
\end{enumerate}
\end{Lem}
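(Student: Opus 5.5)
The plan is to verify the three assertions in order, using only the change of variables $t\mapsto -t$ and the fact that $\mathcal P_2$ is spanned by functions of definite parity.

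For (i), the substitution $s=-t$ gives
\[
\|R_2f\|_{L^2(-1,1)}^2=\int_{-1}^1|f(-t)|^2\,dt=\int_{-1}^1|f(s)|^2\,ds=\|f\|_{L^2(-1,1)}^2 .
\]
Thus $R_2$ is an isometry. It is clear that $R_2^2=I$, so $R_2$ is invertible with $R_2^{-1}=R_2$. The same substitution in $\langle R_2f,h\rangle_{L^2(-1,1)}$ yields $\langle f,R_2h\rangle_{L^2(-1,1)}$, so $R_2^*=R_2$. Hence $R_2$ is a self-adjoint unitary involution.

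For (ii), we have $R_21=1$, $R_2t=-t$ and $R_2t^2=t^2$, so $R_2\mathcal P_2\subset\mathcal P_2$. If $f\in\mathcal P_2^\perp$ and $q\in\mathcal P_2$, then by (i)
\[
\langle R_2f,q\rangle_{L^2(-1,1)}=\langle f,R_2q\rangle_{L^2(-1,1)}=0,
\]
so $\mathcal P_2^\perp$ is invariant as well. For the commutation, decompose $f=\Pi_2f+(I-\Pi_2)f$ and apply $R_2$. The first piece stays in $\mathcal P_2$ and the second stays in $\mathcal P_2^\perp$. By uniqueness of the orthogonal decomposition, $\Pi_2R_2f=R_2\Pi_2f$.

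For (iii), the natural first attempt is to compare $R_2\phi_x$ with $\phi_{-x}$ directly, but they are not equal:
\[
(R_2\phi_x)(t)=(x+t)_+,\qquad \phi_{-x}(t)=(-x-t)_+ .
\]
These are not reflections of each other. They do differ by a polynomial, because $s_+-(-s)_+=s$ gives
\[
(x+t)_+-(-x-t)_+=x+t\in\mathcal P_2 .
\]
So $R_2\phi_x-\phi_{-x}\in\mathcal P_2$, and applying $I-\Pi_2$ annihilates the difference. Combining this with (ii) gives
\[
R_2r_x=R_2(I-\Pi_2)\phi_x=(I-\Pi_2)R_2\phi_x=(I-\Pi_2)\phi_{-x}=r_{-x}.
\]
This step is the only subtle point: it is the identity $s_+-(-s)_+=s$ that makes the projection essential.
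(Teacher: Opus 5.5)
Your proposal is correct and follows essentially the same route as the paper. You use the substitution $t\mapsto -t$ for (i), the invariance of $\mathcal P_2$ and $\mathcal P_2^\perp$ plus uniqueness of the orthogonal decomposition for (ii), and the identity $(x+t)_+-(-x-t)_+=x+t\in\mathcal P_2$ combined with (ii) for (iii). The only cosmetic difference is that you check $R_2^*=R_2$ directly by the change of variables, whereas the paper deduces it from $R_2$ being orthogonal.
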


\begin{proof}
(i) By \eqref{eq:R2},
\[
R_2^2=I
\qquad\text{and}\qquad
\|R_2f\|_{L^2(-1,1)}
=
\|f\|_{L^2(-1,1)}
\]
for every $f\in L^2(-1,1)$. Hence $R_2$ is an orthogonal involution and
\[
R_2^{-1}=R_2.
\]
Since an orthogonal operator satisfies $R_2^*=R_2^{-1}$, we obtain
\[
R_2^*=R_2^{-1}=R_2.
\]

\noindent (ii) Let $g\in\mathcal P_2$. Then
\[
g(t)=a+bt+ct^2
\]
for some $a,b,c\in\mathbb R$. By \eqref{eq:R2},
\[
(R_2g)(t)
=
g(-t)
=
a-bt+ct^2
\in\mathcal P_2.
\]
Hence $\mathcal P_2$ is invariant under $R_2$.

Let now $h\in\mathcal P_2^\perp$ and $g\in\mathcal P_2$. By (i) and
the invariance of $\mathcal P_2$,
\[
\langle R_2h,g\rangle_{L^2(-1,1)}
=
\langle h,R_2g\rangle_{L^2(-1,1)}
=
0.
\]
Thus
\[
R_2h\in\mathcal P_2^\perp,
\]
so $\mathcal P_2^\perp$ is invariant under $R_2$.

Finally, for $f\in L^2(-1,1)$, write
\[
f=\Pi_2f+(I-\Pi_2)f.
\]
Applying $R_2$, we obtain
\[
R_2f
=
R_2\Pi_2f+R_2(I-\Pi_2)f.
\]
By the invariance of $\mathcal P_2$ and $\mathcal P_2^\perp$ under $R_2$,
\[
R_2\Pi_2f\in\mathcal P_2,
\qquad
R_2(I-\Pi_2)f\in\mathcal P_2^\perp.
\]
Hence, by uniqueness of the orthogonal decomposition,
\[
\Pi_2(R_2f)=R_2(\Pi_2f),
\]
and therefore
\[
\Pi_2R_2=R_2\Pi_2.
\]

\noindent (iii) Let $x\in(-1,1)$. By \eqref{eq:rx} and (ii),
\[
R_2r_x
=
R_2(I-\Pi_2)\phi_x
=
(I-\Pi_2)R_2\phi_x.
\]
By \eqref{eq:R2}, for almost every $t\in(-1,1)$,
\[
(R_2\phi_x)(t)
=
\phi_x(-t)
=
(x+t)_+.
\]
Moreover,
\[
(R_2\phi_x)(t)-\phi_{-x}(t)
=
(x+t)_+-(-x-t)_+
=
x+t.
\]
Hence
\[
R_2\phi_x-\phi_{-x}\in\mathcal P_2.
\]
Applying $I-\Pi_2$, we obtain
\[
(I-\Pi_2)R_2\phi_x
=
(I-\Pi_2)\phi_{-x}.
\]
Therefore, by \eqref{eq:rx},
\[
R_2r_x=r_{-x}.
\]
This proves (iii).
\end{proof}

We next analyze the adjoint operator $A^*$ defined by \eqref{eq:A-star}.

For $g\in L^\infty(-1,1)$ and almost every $x\in(0,1)$, set
\begin{equation}\label{eq:ag-bg}
a_g(x)
=
\frac{g(x)-g(-x)}{2},
\qquad
b_g(x)
=
\frac{g(x)+g(-x)}{2}.
\end{equation}
Then $a_g,b_g\in L^\infty(0,1)$.

\begin{Lem}\label{lem:adjoint-decomposition}
The following assertions hold.
\begin{enumerate}

\item[\rm(i)]
The adjoint operator $A^*$ intertwines the reflection operators
$R_\infty$ and $R_2$ defined by \eqref{eq:Rinfty} and \eqref{eq:R2},
respectively:
\[
A^*R_\infty
=
R_2A^*.
\]

\item[\rm(ii)]
For every $g\in L^\infty(-1,1)$,
\[
\|A^*g\|_{L^2(-1,1)}^2
=
\left\|
\int_0^1
a_g(x)\bigl(r_x-r_{-x}\bigr)\,dx
\right\|_{L^2(-1,1)}^2
+
\left\|
\int_0^1
b_g(x)\bigl(r_x+r_{-x}\bigr)\,dx
\right\|_{L^2(-1,1)}^2,
\]
where the integrals are understood as Bochner integrals in
$L^2(-1,1)$.
\end{enumerate}
\end{Lem}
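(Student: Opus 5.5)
For (i), we start from the integral representation \eqref{eq:A-star} and change variables $x\mapsto -x$. For $g\in L^\infty(-1,1)$ and almost every $y$,
\[
(A^*R_\infty g)(y)=\int_{-1}^1 g(-x)\,r_x(y)\,dx=\int_{-1}^1 g(x)\,r_{-x}(y)\,dx .
\]
By Lemma~\ref{lem:reflection}(iii), $r_{-x}=R_2r_x$. The right-hand side is therefore
\[
\int_{-1}^1 g(x)(R_2r_x)(y)\,dx=(A^*g)(-y)=(R_2A^*g)(y).
\]
The only care needed is that $r_x(-y)=(R_2r_x)(y)$ holds for a.e.\ $y$ for each fixed $x$. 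To justify the exchange, we view $A^*g$ as the Bochner integral $\int_{-1}^1 g(x)r_x\,dx$ in $L^2(-1,1)$. This integral is well defined because $x\mapsto r_x=(I-\Pi_2)\phi_x$ is continuous into $L^2$ (as $x\mapsto\phi_x$ is) and is bounded by \eqref{est:norm-rx}. Since $R_2$ is a bounded operator, it commutes with the Bochner integral, which gives $R_2A^*g=\int g(x)R_2r_x\,dx=\int g(x)r_{-x}\,dx=A^*R_\infty g$.

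For (ii), we use the same Bochner representation and split the domain of integration into $(0,1)$ and $(-1,0)$, substituting $x\mapsto -x$ on the second piece:
\[
A^*g=\int_0^1\bigl(g(x)r_x+g(-x)r_{-x}\bigr)dx .
\]
Writing $g(x)=b_g(x)+a_g(x)$ and $g(-x)=b_g(x)-a_g(x)$ for $x\in(0,1)$ gives $A^*g=F_a+F_b$, where
\[
F_a=\int_0^1 a_g(x)(r_x-r_{-x})\,dx,\qquad F_b=\int_0^1 b_g(x)(r_x+r_{-x})\,dx .
\]

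It remains to show that $F_a\perp F_b$ in $L^2(-1,1)$. By Lemma~\ref{lem:reflection}(iii) and $R_2^2=I$, we have $R_2(r_x-r_{-x})=-(r_x-r_{-x})$ and $R_2(r_x+r_{-x})=r_x+r_{-x}$. Pulling $R_2$ through the Bochner integrals then gives $R_2F_a=-F_a$ and $R_2F_b=F_b$, so $F_a$ is odd and $F_b$ is even. Odd and even functions are orthogonal in $L^2(-1,1)$, since $\langle F_a,F_b\rangle=\langle R_2F_a,R_2F_b\rangle=-\langle F_a,F_b\rangle$ by Lemma~\ref{lem:reflection}(i). Pythagoras then yields the stated identity.

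The one technical point is the identification of the pointwise formula \eqref{eq:A-star} with the Bochner integral, which requires strong measurability. We obtain this from the continuity of $x\mapsto\phi_x$ in $L^2$, together with the boundedness of the projection $I-\Pi_2$. Everything else is bookkeeping.
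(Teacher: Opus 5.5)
Your proposal is correct and follows the paper's proof essentially step for step: the change of variables $x\mapsto -x$ combined with $r_{-x}=R_2r_x$ gives (i), and for (ii) you split \eqref{eq:A-star} over $(0,1)$ and $(-1,0)$ and then get odd/even orthogonality by pulling $R_2$ through the Bochner integrals. Your extra justification that the Bochner integral is well defined is a correct detail that the paper leaves implicit; it rests on the continuity of $x\mapsto\phi_x$ into $L^2(-1,1)$ together with the uniform bound \eqref{est:norm-rx}.
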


\begin{proof}
(i) Let $g\in L^\infty(-1,1)$. By \eqref{eq:A-star} and
\eqref{eq:Rinfty},
\[
A^*R_\infty g
=
\int_{-1}^1 g(-x)r_x\,dx
=
\int_{-1}^1 g(x)r_{-x}\,dx,
\]
where the second equality follows from the change of variable
$x\mapsto -x$. By Lemma~\ref{lem:reflection}(iii),
\[
r_{-x}=R_2r_x,
\]
and therefore
\[
A^*R_\infty g
=
\int_{-1}^1 g(x)R_2r_x\,dx
=
R_2\int_{-1}^1 g(x)r_x\,dx
=
R_2A^*g.
\]
Hence
\[
A^*R_\infty=R_2A^*.
\]

\noindent (ii) Let $g\in L^\infty(-1,1)$. By \eqref{eq:ag-bg},
for almost every $x\in(0,1)$,
\[
g(x)=a_g(x)+b_g(x),
\qquad
g(-x)=b_g(x)-a_g(x).
\]
Splitting the integral in \eqref{eq:A-star} over $(-1,0)$ and
$(0,1)$, and using the change of variable $x\mapsto -x$ on
$(-1,0)$, we obtain, in $L^2(-1,1)$,
\begin{equation}\label{eq2:A-star}
A^*g
=
\int_0^1 a_g(x)\bigl(r_x-r_{-x}\bigr)\,dx
+
\int_0^1 b_g(x)\bigl(r_x+r_{-x}\bigr)\,dx.
\end{equation}

By Lemma~\ref{lem:reflection}(i) and (iii),
\[
R_2(r_x-r_{-x})
=
-(r_x-r_{-x}),
\qquad
R_2(r_x+r_{-x})
=
r_x+r_{-x}.
\]
Since $R_2$ is bounded and linear, it commutes with the above Bochner
integrals. Hence
\[
\int_0^1 a_g(x)\bigl(r_x-r_{-x}\bigr)\,dx
\]
is odd, whereas
\[
\int_0^1 b_g(x)\bigl(r_x+r_{-x}\bigr)\,dx
\]
is even. The two terms are therefore orthogonal in $L^2(-1,1)$.
Using \eqref{eq2:A-star}, we obtain
\[
\|A^*g\|_{L^2(-1,1)}^2
=
\left\|
\int_0^1 a_g(x)\bigl(r_x-r_{-x}\bigr)\,dx
\right\|_{L^2(-1,1)}^2
+
\left\|
\int_0^1 b_g(x)\bigl(r_x+r_{-x}\bigr)\,dx
\right\|_{L^2(-1,1)}^2.
\]
This proves (ii) and completes the proof.
\end{proof}

\subsection{Kernel identities and estimates}

For $x,y\in(-1,1)$, define the Gram kernel
\begin{equation}\label{eq:K}
K(x,y)
=
\langle r_x,r_y\rangle_{L^2(-1,1)}.
\end{equation}
For $x,y\in(0,1)$, set
\begin{equation}\label{eq:Kpm}
\begin{aligned}
K_-(x,y)
&=
K(x,y)-K(x,-y),\\
K_+(x,y)
&=
K(x,y)+K(x,-y).
\end{aligned}
\end{equation}
By \eqref{est:norm-rx} and the Cauchy--Schwarz inequality,
\[
K\in L^\infty((-1,1)^2),
\]
and hence
\[
K_-,K_+\in L^\infty((0,1)^2).
\]

We also define
\begin{equation}\label{eq:kappa-pm}
\begin{aligned}
\kappa_-(x)
&=
\int_0^1 K_-(x,y)\,dy,\\
\kappa_+(x)
&=
\frac{1}{\sqrt2}
\|r_x+r_{-x}\|_{L^2(-1,1)},
\end{aligned}
\qquad 0<x<1.
\end{equation}

\begin{Lem}\label{lem:kernel-properties}
The following assertions hold.
\begin{enumerate}

\item[\rm(i)]
For all $x,y\in(-1,1)$,
\[
K(-x,-y)=K(x,y),
\qquad
K(-x,y)=K(x,-y).
\]
Consequently,
\[
K_\pm(x,y)=K_\pm(y,x),
\qquad
0<x,y<1.
\]

\item[\rm(ii)]
For $0<x\leq y<1$,
\[
K_-(x,y)
=
\frac{x(1-y)^2}{12}
\left(
3y-x^2(y+2)
\right).
\]
In particular,
\[
K_-(x,y)>0,
\qquad
0<x,y<1.
\]

\item[\rm(iii)]
For $0<x<1$,
\[
\kappa_-(x)
=
\frac{x(1-x)^2(2x+1)}{48}.
\]

\item[\rm(iv)]
For $0<x<1$,
\[
\kappa_+(x)^2
=
K_+(x,x)
=
\frac{(1-x)^3}{192}P(x),
\]
where
\begin{equation}\label{eq:P}
P(x)
=
15x^5+45x^4+30x^3-30x^2+3x+1.
\end{equation}

\end{enumerate}
\end{Lem}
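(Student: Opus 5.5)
The plan is to obtain all four assertions from the representation
\[
K(x,y)=\langle\phi_x,\phi_y\rangle_{L^2(-1,1)}-\langle\Pi_2\phi_x,\Pi_2\phi_y\rangle_{L^2(-1,1)},
\]
which holds because $I-\Pi_2$ is an orthogonal projection. For the second term I will use the orthonormal Legendre basis of $\mathcal P_2$:
\[
e_0=\tfrac{1}{\sqrt2},\qquad e_1=\sqrt{\tfrac32}\,t,\qquad e_2=\sqrt{\tfrac52}\,\tfrac{3t^2-1}{2}.
\]
This gives
\[
\langle\Pi_2\phi_x,\Pi_2\phi_y\rangle=\sum_{j=0}^2 c_j(x)c_j(y),\qquad c_j(x)=\int_{-1}^x (x-t)e_j(t)\,dt .
\]
Each $c_j$ is an explicit polynomial in $x$. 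The first term, for $-1<x\le y<1$, is
\[
\langle\phi_x,\phi_y\rangle=\int_{-1}^x(x-t)(y-t)\,dt .
\]
This is also an explicit polynomial, since $\phi_x\phi_y$ vanishes for $t>x$.

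For (i), no computation is needed. By Lemma~\ref{lem:reflection}(i),(iii), $r_{-x}=R_2r_x$ and $R_2$ is orthogonal and self-adjoint. Hence
\[
K(-x,-y)=\langle R_2r_x,R_2r_y\rangle=K(x,y),\qquad K(-x,y)=\langle R_2r_x,r_y\rangle=\langle r_x,R_2r_y\rangle=K(x,-y).
\]
Since $K$ is symmetric, $K(y,-x)=K(-x,y)=K(x,-y)$, and therefore $K_\pm(y,x)=K_\pm(x,y)$.

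For (ii) and (iv), I will exploit parity to reduce the algebra. We have
\[
K_-(x,y)=\langle r_x,\,r_y-r_{-y}\rangle,\qquad r_y-r_{-y}=(I-\Pi_2)(\phi_y-R_2\phi_y).
\]
The function $\phi_y-R_2\phi_y$ is odd, so only the $e_1$-component of $\Pi_2$ contributes. Thus
\[
K_-(x,y)=\langle\phi_x,\phi_y\rangle-\langle\phi_x,\phi_{-y}\rangle-2c_1(x)c_1(y).
\]
Here I used $\langle\phi_x,R_2\phi_y\rangle=\langle\phi_x,\phi_{-y}\rangle$ modulo the linear polynomial $x+t$. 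This correction is absorbed consistently once one works directly with $r_x$; I will verify it term by term.

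Similarly, $K_+(x,x)=\langle r_x,r_x+r_{-x}\rangle$ involves only the even components $e_0,e_2$. From (i),
\[
\|r_x+r_{-x}\|^2=2K(x,x)+2K(x,-x)=2K_+(x,x),
\]
which gives $\kappa_+^2=K_+(x,x)$.

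I will evaluate the piecewise polynomials for $0<x\le y<1$ with $-y<x$, and factor the results to obtain the stated expressions. For $K_+(x,x)$ I will check that the factor $(1-x)^3$ appears, since $r_{\pm x}$ degenerates at $x=1$, and then identify the quintic $P$.

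Positivity in (ii) is then straightforward. For $0<x\le y<1$ we have $x^2(y+2)\le y(y+2)<3y$, because $y+2<3$. Hence the bracket is positive, and so are $x$ and $(1-y)^2$. The case $x>y$ follows by the symmetry proved in (i).

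For (iii), I will write
\[
\kappa_-(x)=\int_0^x K_-(y,x)\,dy+\int_x^1 K_-(x,y)\,dy
\]
and apply the formula of (ii) with the roles of the variables assigned correctly in each piece. The result is then integrated and factored.

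The main obstacle is purely the bookkeeping in (ii) and (iv). One must get the correct case of $\langle\phi_x,\phi_{-y}\rangle$: for $x,y>0$ we have $-y<x$, so the overlap is $(-1,-y)$. One must also get the Legendre coefficients $c_j$ right. I would cross-check the final formulas at $x=y\to1$, where both sides vanish, and at small $x$.
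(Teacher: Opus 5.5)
Your route works and differs from the paper's, but one displayed step is wrong, and the justification you give for it does not hold.

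\textbf{The error in (ii).} The reduction to the $e_1$-component is valid for the odd function $\phi_y-R_2\phi_y$, not for $\phi_y-\phi_{-y}$. Since $R_2\phi_y-\phi_{-y}=y+t$ (not $x+t$), replacing $\langle\phi_x,R_2\phi_y\rangle$ by $\langle\phi_x,\phi_{-y}\rangle$ drops the term $\langle\phi_x,y+t\rangle=\frac16(x+1)^2(x+3y-2)$, which is nonzero in general. Nothing in your formula compensates for it. The correction is only ``absorbed'' if you expand $K(x,-y)=\langle\phi_x,\phi_{-y}\rangle-\sum_{j}c_j(x)c_j(-y)$ with all three coefficients, because $c_j(-y)\neq(-1)^jc_j(y)$. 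As displayed, your formula gives about $-0.132$ at $(x,y)=(0.3,0.4)$, whereas $K_-(0.3,0.4)\approx0.0089$; it would even contradict positivity.

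\textbf{The fix.} Keep $R_2\phi_y$. For $0<x\le y<1$ one has $\langle\phi_x,R_2\phi_y\rangle=\int_{-y}^x(x-t)(y+t)\,dt=\frac16(x+y)^3$. With $c_1(x)=\sqrt{3/2}\,(x+1)^2(x-2)/6$ this gives
\[
K_-(x,y)=\frac{(x+1)^3}{3}+\frac{(y-x)(x+1)^2}{2}-\frac{(x+y)^3}{6}-\frac{(x+1)^2(x-2)(y+1)^2(y-2)}{12},
\]
which expands exactly to $\frac{x(1-y)^2}{12}\bigl(3y-x^2(y+2)\bigr)$. The same caution applies to (iv): $\phi_x+R_2\phi_x$ is even, but $\phi_x+\phi_{-x}$ is not. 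With $c_0(x)=(x+1)^2/(2\sqrt2)$ and $c_2(x)=\sqrt{5/2}\,(1-x^2)^2/8$ you get
\[
K_+(x,x)=\frac{(x+1)^3}{3}+\frac{4x^3}{3}-\frac{(x+1)^4}{4}-\frac{5(1-x^2)^4}{64}=\frac{(1-x)^3}{192}P(x).
\]

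\textbf{Comparison with the paper.} The paper computes the odd functions $d_z=r_z-r_{-z}=(I-\Pi_2)\psi_z$ explicitly; they are piecewise linear on $(0,1)$, using $\Pi_2\psi_z=-\beta_zt$. It then proves $K_-(x,y)=\frac12\langle d_x,d_y\rangle=\int_0^1d_xd_y\,dt$ and integrates over three pieces. For (iv) it writes $r_x+r_{-x}=(I-\Pi_2)\eta_x$ with $\eta_x=(|t|-x)_+$ and subtracts $\|\Pi_2\eta_x\|^2$.

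Your Legendre-coefficient approach replaces these piecewise integrations by three simple polynomials $c_j$ and two elementary overlap integrals. Parts (ii) and (iv) then become polynomial identities that are easy to check mechanically. The paper's route, in exchange, yields the representation $K_-=\frac12\langle d_x,d_y\rangle$ in \eqref{eq:Kminus-exp1}, which is exactly what Lemma~\ref{lem:odd-estimate} uses. On your route you would record it separately; it follows in one line from $R_2d_y=-d_y$.

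Parts (i) and (iii) coincide with the paper. Your positivity bound $x^2(y+2)\le y(y+2)<3y$ is valid, just slightly cruder than the paper's $3y-x^2(y+2)\ge y(1-y)(y+3)$.
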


\begin{proof}
(i) Let $x,y\in(-1,1)$. By Lemma~\ref{lem:reflection}(i) and (iii),
\[
\begin{aligned}
K(-x,-y)
&=
\langle r_{-x},r_{-y}\rangle_{L^2(-1,1)}\\
&=
\langle R_2r_x,R_2r_y\rangle_{L^2(-1,1)}\\
&=
\langle r_x,r_y\rangle_{L^2(-1,1)}\\
&=
K(x,y),
\end{aligned}
\]
and
\[
\begin{aligned}
K(-x,y)
&=
\langle r_{-x},r_y\rangle_{L^2(-1,1)}\\
&=
\langle R_2r_x,r_y\rangle_{L^2(-1,1)}\\
&=
\langle r_x,R_2r_y\rangle_{L^2(-1,1)}\\
&=
\langle r_x,r_{-y}\rangle_{L^2(-1,1)}\\
&=
K(x,-y).
\end{aligned}
\]
Hence, by the symmetry of the inner product and \eqref{eq:Kpm}, for
$x,y\in(0,1)$,
\[
\begin{aligned}
K_\pm(y,x)
&=
K(y,x)\pm K(y,-x)\\
&=
K(x,y)\pm K(-x,y)\\
&=
K(x,y)\pm K(x,-y)\\
&=
K_\pm(x,y).
\end{aligned}
\]
This proves (i).

\noindent (ii) For $z\in(0,1)$, set
\[
d_z=r_z-r_{-z}.
\]
By Lemma~\ref{lem:reflection}(iii),
\begin{equation}\label{eq:odd-dz}
R_2d_z
=
R_2r_z-R_2r_{-z}
=
r_{-z}-r_z
=
-d_z,
\qquad 0<z<1.
\end{equation}
Using also Lemma~\ref{lem:reflection}(i), for all $x,y\in(0,1)$,
\[
\begin{aligned}
\langle r_{-x},d_y\rangle_{L^2(-1,1)}
&=
\langle R_2r_x,d_y\rangle_{L^2(-1,1)}\\
&=
\langle r_x,R_2d_y\rangle_{L^2(-1,1)}\\
&=
-\langle r_x,d_y\rangle_{L^2(-1,1)}.
\end{aligned}
\]
Hence
\[
\begin{aligned}
\langle d_x,d_y\rangle_{L^2(-1,1)}
&=
\langle r_x-r_{-x},d_y\rangle_{L^2(-1,1)}\\
&=
2\langle r_x,d_y\rangle_{L^2(-1,1)}.
\end{aligned}
\]
Consequently, by \eqref{eq:Kpm},
\begin{equation}\label{eq:Kminus-exp1}
\begin{aligned}
K_-(x,y)
&=
\langle r_x,r_y-r_{-y}\rangle_{L^2(-1,1)}\\
&=
\langle r_x,d_y\rangle_{L^2(-1,1)}\\
&=
\frac12
\langle d_x,d_y\rangle_{L^2(-1,1)},
\qquad 0<x,y<1.
\end{aligned}
\end{equation}

We next compute $d_z$. Recalling \eqref{eq:phi-x}, for $z\in(0,1)$ set
\[
\psi_z(t)
=
\phi_z(t)-\phi_{-z}(t)-z
=
(z-t)_+-(-z-t)_+-z,
\qquad -1<t<1.
\]
Since the constant function $t\mapsto z$ belongs to $\mathcal P_2$,
by \eqref{eq:rx},
\begin{equation}\label{eq:dz-psi}
d_z
=
(I-\Pi_2)\psi_z.
\end{equation}
The function $\psi_z$ is odd and satisfies
\begin{equation}\label{eq:psi-z-piecewise}
\psi_z(t)
=
\begin{cases}
z, & -1<t<-z,\\
-t, & -z\leq t\leq z,\\
-z, & z<t<1.
\end{cases}
\end{equation}
By Lemma~\ref{lem:reflection}(ii),
\[
R_2\Pi_2\psi_z
=
\Pi_2R_2\psi_z
=
-\Pi_2\psi_z.
\]
Hence $\Pi_2\psi_z$ is odd. Since $\Pi_2\psi_z\in\mathcal P_2$,
it follows that
\[
\Pi_2\psi_z\in\operatorname{span}\{t\}.
\]
Since
\[
\|t\|_{L^2(-1,1)}^2
=
\frac23
\]
and, by \eqref{eq:psi-z-piecewise},
\[
\begin{aligned}
\langle \psi_z,t\rangle_{L^2(-1,1)}
&=
-2\left(
\int_0^z t^2\,dt
+
z\int_z^1 t\,dt
\right)\\
&=
\frac{z^3-3z}{3},
\end{aligned}
\]
we obtain
\[
\Pi_2\psi_z(t)
=
\frac{z^3-3z}{2}\,t,
\qquad -1<t<1.
\]
Therefore, by \eqref{eq:dz-psi} and \eqref{eq:psi-z-piecewise}, setting
\begin{equation}\label{eq:betaz}
\beta_z
=
\frac{z(3-z^2)}{2},
\qquad 0<z<1,
\end{equation}
we obtain, for $0<t<1$,
\begin{equation}\label{eq:dz-piecewise}
d_z(t)
=
\begin{cases}
(\beta_z-1)t, & 0<t\leq z,\\
\beta_z t-z, & z<t<1.
\end{cases}
\end{equation}

Let $0<x\leq y<1$. By \eqref{eq:odd-dz} and
\eqref{eq:Kminus-exp1},
\[
K_-(x,y)
=
\int_0^1 d_x(t)d_y(t)\,dt.
\]
Using \eqref{eq:dz-piecewise},
\[
K_-(x,y)
=
\int_0^x
(\beta_x-1)(\beta_y-1)t^2\,dt
+
\int_x^y
(\beta_xt-x)(\beta_y-1)t\,dt
+
\int_y^1
(\beta_xt-x)(\beta_yt-y)\,dt.
\]
Using \eqref{eq:betaz} and simplifying, we obtain
\[
K_-(x,y)
=
\frac{x(1-y)^2}{12}
\left(
3y-x^2(y+2)
\right).
\]
Finally, since $0<x\leq y<1$,
\[
3y-x^2(y+2)
\geq
3y-y^2(y+2)
=
y(1-y)(y+3)
>
0.
\]
Thus
\[
K_-(x,y)>0,
\qquad
0<x\leq y<1.
\]
By (i), $K_-$ is symmetric, and hence
\[
K_-(x,y)>0,
\qquad
0<x,y<1.
\]
This proves (ii).

\noindent (iii) Let $0<x<1$. By \eqref{eq:kappa-pm} and the symmetry
of $K_-$ established in (i),
\[
\kappa_-(x)
=
\int_0^x K_-(y,x)\,dy
+
\int_x^1 K_-(x,y)\,dy.
\]
Using (ii), we obtain
\[
\kappa_-(x)
=
\frac{(1-x)^2}{12}
\int_0^x
y\left(3x-y^2(x+2)\right)\,dy
+
\frac{x}{12}
\int_x^1
(1-y)^2
\left(3y-x^2(y+2)\right)\,dy.
\]
A direct computation gives
\[
\kappa_-(x)
=
\frac{x(1-x)^2(2x+1)}{48}.
\]
This proves (iii).

\noindent (iv) Let $0<x<1$. By \eqref{eq:kappa-pm},
\[
\begin{aligned}
\kappa_+(x)^2
&=
\frac12
\|r_x+r_{-x}\|_{L^2(-1,1)}^2\\
&=
\frac12
\left(
\|r_x\|_{L^2(-1,1)}^2
+
\|r_{-x}\|_{L^2(-1,1)}^2
+
2\langle r_x,r_{-x}\rangle_{L^2(-1,1)}
\right).
\end{aligned}
\]
By Lemma~\ref{lem:reflection}(i) and (iii),
\[
\|r_{-x}\|_{L^2(-1,1)}
=
\|r_x\|_{L^2(-1,1)}.
\]
Therefore, by \eqref{eq:K},
\[
\begin{aligned}
\kappa_+(x)^2
&=
\|r_x\|_{L^2(-1,1)}^2
+
\langle r_x,r_{-x}\rangle_{L^2(-1,1)}\\
&=
K(x,x)+K(x,-x).
\end{aligned}
\]
Thus, by \eqref{eq:Kpm},
\begin{equation}\label{eq:kappa+-id1}
\kappa_+(x)^2
=
K_+(x,x).
\end{equation}

Set
\[
\eta_x(t)=(|t|-x)_+,
\qquad -1<t<1.
\]
By \eqref{eq:rx} and Lemma~\ref{lem:reflection}(ii)--(iii),
\[
\begin{aligned}
r_x+r_{-x}
&=
(I-\Pi_2)\phi_x+(I-\Pi_2)R_2\phi_x\\
&=
(I-\Pi_2)(\phi_x+R_2\phi_x).
\end{aligned}
\]
Since
\[
\phi_x(t)+(R_2\phi_x)(t)
=
2x+\eta_x(t),
\qquad -1<t<1,
\]
and the constant function $t\mapsto2x$ belongs to $\mathcal P_2$, we
obtain
\begin{equation}\label{eq:rx-plus-rminusx}
r_x+r_{-x}
=
(I-\Pi_2)\eta_x.
\end{equation}

The function $\eta_x$ is even. Since $\Pi_2$ commutes with $R_2$ by
Lemma~\ref{lem:reflection}(ii), $\Pi_2\eta_x$ is also even. Hence
\[
\Pi_2\eta_x
\in
\operatorname{span}\left\{1,t^2-\frac13\right\},
\]
where the displayed basis is orthogonal in $L^2(-1,1)$. Moreover,
\[
\|1\|_{L^2(-1,1)}^2=2,
\qquad
\left\|t^2-\frac13\right\|_{L^2(-1,1)}^2
=
\frac{8}{45},
\]
and a direct computation gives
\[
\|\eta_x\|_{L^2(-1,1)}^2
=
\frac{2(1-x)^3}{3},
\qquad
\langle\eta_x,1\rangle_{L^2(-1,1)}
=
(1-x)^2,
\]
and
\[
\left\langle
\eta_x,t^2-\frac13
\right\rangle_{L^2(-1,1)}
=
\frac{(1-x^2)^2}{6}.
\]
Therefore,
\[
\begin{aligned}
\|\Pi_2\eta_x\|_{L^2(-1,1)}^2
&=
\frac{\langle\eta_x,1\rangle_{L^2(-1,1)}^2}
{\|1\|_{L^2(-1,1)}^2}
+
\frac{
\left\langle\eta_x,t^2-\frac13\right\rangle_{L^2(-1,1)}^2}
{\left\|t^2-\frac13\right\|_{L^2(-1,1)}^2}\\
&=
\frac{(1-x)^4}{2}
+
\frac{5(1-x^2)^4}{32}.
\end{aligned}
\]
By \eqref{eq:kappa-pm} and \eqref{eq:rx-plus-rminusx},
\[
\begin{aligned}
2\kappa_+(x)^2
&=
\|(I-\Pi_2)\eta_x\|_{L^2(-1,1)}^2\\
&=
\|\eta_x\|_{L^2(-1,1)}^2
-
\|\Pi_2\eta_x\|_{L^2(-1,1)}^2\\
&=
\frac{2(1-x)^3}{3}
-
\frac{(1-x)^4}{2}
-
\frac{5(1-x^2)^4}{32}.
\end{aligned}
\]
Using \eqref{eq:P} and simplifying, we obtain
\[
\kappa_+(x)^2
=
\frac{(1-x)^3}{192}P(x).
\]
Together with \eqref{eq:kappa+-id1}, this proves (iv) and completes
the proof.
\end{proof}

\subsection{Estimates for the odd and even components}

We now use the kernel identities established above to estimate separately
the odd and even components in the decomposition \eqref{eq2:A-star} of
$A^*g$.

We first estimate the odd component.

\begin{Lem}\label{lem:odd-estimate}
For every $a\in L^\infty(0,1)$,
\[
\left\|
\int_0^1 a(x)\bigl(r_x-r_{-x}\bigr)\,dx
\right\|_{L^2(-1,1)}^2
\leq
2\int_0^1 \kappa_-(x)a(x)^2\,dx.
\]
Moreover, equality holds if and only if $a$ is constant almost
everywhere on $(0,1)$.
\end{Lem}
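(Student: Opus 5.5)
Expanding the square and using Fubini, the left-hand side equals
\[
\int_0^1\!\!\int_0^1 a(x)a(y)\langle d_x,d_y\rangle_{L^2(-1,1)}\,dx\,dy
=
2\int_0^1\!\!\int_0^1 K_-(x,y)a(x)a(y)\,dx\,dy,
\]
where $d_z=r_z-r_{-z}$ and we used the identity $\langle d_x,d_y\rangle=2K_-(x,y)$ from \eqref{eq:Kminus-exp1} in the proof of Lemma~\ref{lem:kernel-properties}(ii). This is justified since $K_-\in L^\infty((0,1)^2)$ and $a\in L^\infty(0,1)$, so the Bochner integral may be paired with itself inside the double integral. The claim therefore reduces to the Schur-type inequality
\[
\int_0^1\!\!\int_0^1 K_-(x,y)a(x)a(y)\,dx\,dy\le\int_0^1\kappa_-(x)a(x)^2\,dx .
\]

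To prove it, I will use the pointwise inequality $a(x)a(y)\le\frac12\bigl(a(x)^2+a(y)^2\bigr)$, with equality if and only if $a(x)=a(y)$. This inequality can be multiplied by $K_-(x,y)$, which is strictly positive by Lemma~\ref{lem:kernel-properties}(ii). Integrating over $(0,1)^2$ and using the symmetry $K_-(x,y)=K_-(y,x)$ from Lemma~\ref{lem:kernel-properties}(i), the two halves combine into
\[
\int_0^1 a(x)^2\Bigl(\int_0^1K_-(x,y)\,dy\Bigr)dx=\int_0^1\kappa_-(x)a(x)^2\,dx,
\]
by the definition \eqref{eq:kappa-pm}. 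Equivalently, the deficit can be written as
\[
\frac12\int_0^1\!\!\int_0^1K_-(x,y)\bigl(a(x)-a(y)\bigr)^2\,dx\,dy\ge0,
\]
and after multiplying by $2$ this gives the stated bound.

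For the equality case, if $a$ is a.e. constant, the deficit integrand vanishes a.e., so equality holds. Conversely, if equality holds, the nonnegative integrand $K_-(x,y)(a(x)-a(y))^2$ vanishes for a.e. $(x,y)$. Since $K_->0$ everywhere on $(0,1)^2$, this forces $a(x)=a(y)$ for a.e. $(x,y)$. By Fubini there is then some $y_0$ with $a(x)=a(y_0)$ for a.e. $x$, so $a$ is a.e. constant.

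There is no real obstacle here, because the positivity and symmetry of $K_-$ have already been established. The only points needing care are the Fubini and Bochner justification for expanding the norm, which follows from boundedness of $K$ and of $a$, and the use of strict positivity of $K_-$ in the equality case.
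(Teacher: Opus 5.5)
Your proposal is correct and follows the paper's proof essentially step for step. Both arguments expand the norm via \eqref{eq:Kminus-exp1} and Fubini, apply $2a(x)a(y)\le a(x)^2+a(y)^2$ weighted by the strictly positive symmetric kernel $K_-$, and obtain the equality case from the vanishing of $\int_0^1\!\int_0^1 K_-(x,y)\bigl(a(x)-a(y)\bigr)^2\,dx\,dy$.
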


\begin{proof}
Let $a\in L^\infty(0,1)$. By \eqref{eq:Kminus-exp1} and Fubini's
theorem,
\[
\begin{aligned}
\left\|
\int_0^1 a(x)\bigl(r_x-r_{-x}\bigr)\,dx
\right\|_{L^2(-1,1)}^2
&=
\int_0^1\int_0^1
a(x)a(y)
\langle d_x,d_y\rangle_{L^2(-1,1)}
\,dx\,dy\\
&=
2\int_0^1\int_0^1
a(x)a(y)K_-(x,y)
\,dx\,dy.
\end{aligned}
\]
By Lemma~\ref{lem:kernel-properties}(ii),
\[
K_-(x,y)>0,
\qquad 0<x,y<1.
\]
Hence, using
\[
2a(x)a(y)\leq a(x)^2+a(y)^2,
\]
together with \eqref{eq:kappa-pm} and the symmetry of $K_-$ established
in Lemma~\ref{lem:kernel-properties}(i), we obtain
\[
\begin{aligned}
\left\|
\int_0^1 a(x)\bigl(r_x-r_{-x}\bigr)\,dx
\right\|_{L^2(-1,1)}^2
&\leq
\int_0^1\int_0^1
\bigl(a(x)^2+a(y)^2\bigr)K_-(x,y)
\,dx\,dy\\
&=
2\int_0^1
a(x)^2
\left(
\int_0^1 K_-(x,y)\,dy
\right)\,dx\\
&=
2\int_0^1 \kappa_-(x)a(x)^2\,dx.
\end{aligned}
\]

If equality holds, then
\[
\int_0^1\int_0^1
(a(x)-a(y))^2K_-(x,y)\,dx\,dy
=
0.
\]
Since $K_-(x,y)>0$ on $(0,1)^2$, it follows that
\[
a(x)=a(y)
\]
for almost every $(x,y)\in(0,1)^2$. By Fubini's theorem, $a$ is
constant almost everywhere on $(0,1)$.

Conversely, if $a$ is constant almost everywhere, then
\[
2a(x)a(y)=a(x)^2+a(y)^2
\]
for almost every $(x,y)\in(0,1)^2$, and equality holds.
\end{proof}

\begin{Lem}\label{lem:even-estimate}
Let $h\in L^\infty(0,1)$ satisfy
\[
0\leq h(x)\leq1
\qquad\text{for almost every }x\in(0,1),
\]
and
\[
\|h\|_{L^\infty(0,1)}>0.
\]
Then
\begin{equation}\label{est-L2.7}
\left(
\int_0^1 \kappa_+(x)h(x)\,dx
\right)^2
<
\int_0^1 \kappa_-(x)h(x)\,dx.
\end{equation}
\end{Lem}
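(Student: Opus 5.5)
The plan is to exploit the structure of \eqref{est-L2.7}: the left-hand side is a convex (quadratic) functional of $h$, while the right-hand side is linear. Set
\[
\Phi(h)=\Bigl(\int_0^1\kappa_+(x)h(x)\,dx\Bigr)^2-\int_0^1\kappa_-(x)h(x)\,dx,
\]
which is convex on the convex set $\mathcal H=\{h:0\le h\le1\text{ a.e.}\}$. We must show $\Phi(h)<0$ whenever $h\neq0$.

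\emph{Step 1: reduction to indicator functions.} Use the layer-cake representation $h=\int_0^1\mathbf 1_{E_\lambda}\,d\lambda$ with $E_\lambda=\{h>\lambda\}$. Both functionals are linear in $h$ before squaring. By Jensen (or Cauchy--Schwarz in $\lambda$), with $p(\lambda)=\int_{E_\lambda}\kappa_+$ and $q(\lambda)=\int_{E_\lambda}\kappa_-$,
\[
\Bigl(\int_0^1 p(\lambda)\,d\lambda\Bigr)^2\le\int_0^1 p(\lambda)^2\,d\lambda .
\]
Hence it suffices to prove
\[
\Bigl(\int_E\kappa_+\Bigr)^2<\int_E\kappa_-\qquad\text{for every measurable }E\subset(0,1)\text{ with }|E|>0,
\]
applied to those $\lambda$ with $|E_\lambda|>0$, which form a set of positive $\lambda$-measure since $\|h\|_\infty>0$. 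This gives strictness.

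\emph{Step 2: bathtub reduction.} For fixed $m=\int_E\kappa_-$, the quantity $\int_E\kappa_+$ is maximized by a superlevel set of the ratio $\rho=\kappa_+/\kappa_-$. Lemma~\ref{lem:kernel-properties}(iii)--(iv) give explicit formulas. Near $0$ we have $\kappa_-\sim x/48$ and $\kappa_+\to1/\sqrt{192}$. Near $1$ we have $\kappa_-\sim(1-x)^2/16$ and $\kappa_+\sim(1-x)^{3/2}/\sqrt3$, since $P(1)=64$. So $\rho$ blows up at both endpoints, and I expect $\rho$ to be decreasing and then increasing on $(0,1)$. I would verify this by checking the sign of the derivative of $\rho^2=(1-x)^3P(x)\cdot 48^2/\bigl(192\,x^2(1-x)^4(2x+1)^2\bigr)$, which is a rational function, so the sign reduces to a polynomial sign check. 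Then the extremal sets are $E=(0,\alpha)\cup(\beta,1)$ with $\alpha\le\beta$. The claim becomes a two-parameter inequality
\[
F(\alpha,\beta)=\int_{(0,\alpha)\cup(\beta,1)}\kappa_- -\Bigl(\int_{(0,\alpha)\cup(\beta,1)}\kappa_+\Bigr)^2>0 .
\]

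\emph{Step 3: the two-parameter inequality, the main obstacle.} Since $\kappa_+$ involves $\sqrt{P}$, its integrals are not elementary. I would first try to replace $\kappa_+$ by an explicit upper bound. Two candidate devices are:
\begin{itemize}
\item a weighted Cauchy--Schwarz estimate $\bigl(\int_E\kappa_+\bigr)^2\le\int_E w\cdot\int_E\kappa_+^2/w$ with a well-chosen polynomial weight $w$;
\item a polynomial majorant of $\sqrt{(1-x)^3P(x)/192}$ of the form $(1-x)^{3/2}\times$ a polynomial, verified by squaring.
\end{itemize}
Either way, $F$ reduces to a polynomial inequality in $(\alpha,(1-\beta)^{1/2})$ on a triangle. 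The sanity check at small $\alpha$ (with $\beta=1$) gives $\alpha^2/192$ versus $\alpha^2/96$, so there is a factor-$2$ margin there. The danger zone is large $E$ (near $E=(0,1)$), where I would compute $\int_0^1\kappa_-=\int_0^1x(1-x)^2(2x+1)/48\,dx$ and bound $\int_0^1\kappa_+$ numerically with rigorous error to confirm a positive gap. Finally, I would cover the remaining region by monotonicity of $F$ in each parameter or a finite interval-arithmetic check. This step is where I expect the real work, and possibly a tight margin, to lie.
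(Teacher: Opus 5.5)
Your Steps 1 and 2 are valid reductions. Step 1 is exactly how the paper concludes: it writes $\int\rho h\,d\mu=\int_0^1\bigl(\int_{E_t}\rho\,d\mu\bigr)dt<\int_0^1\mu(E_t)^{1/2}dt\le\bigl(\int_0^1\mu(E_t)\,dt\bigr)^{1/2}$, which is your $(\int p)^2\le\int p^2<\int q$. The bathtub principle also correctly reduces the set inequality to superlevel sets of $\rho=\kappa_+/\kappa_-$. However, the proposal stops exactly where the content of the lemma lies, so as written it is a plan rather than a proof:
\begin{itemize}
\item The unimodality of $\rho$ is only conjectured.
\item Step 3 never proves $(\int_E\kappa_+)^2<\int_E\kappa_-$ for the sets $E=(0,\alpha)\cup(\beta,1)$. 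No weight $w$ or majorant is exhibited, and the two-parameter inequality is never written down.
\item Its verification is deferred to unspecified monotonicity or interval-arithmetic arguments. Any such check must also handle $F$ vanishing to second order at the corner $(\alpha,\beta)=(0,1)$.
\end{itemize}
Incidentally, $E=(0,1)$ is not the critical regime. There the ratio $(\int_E\kappa_+)^2/\int_E\kappa_-$ is numerically about $0.42$, whereas small intervals $(0,\alpha)$ push it up to $1/2$.

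The missing idea is that you never need to integrate $\kappa_+$ (hence $\sqrt P$) over a set, nor optimize over $(\alpha,\beta)$. Take $d\mu=\kappa_-\,dx$ and $W(x)=\mu((0,x))=x^2(8x^3-15x^2+10)/960$. It suffices to prove the weak-type bound $\mu(\{\rho>\lambda\})<1/(4\lambda^2)$ for all $\lambda>0$. Then, for $m=\mu(E)>0$,
\[
\int_E\rho\,d\mu=\int_0^\infty\mu(E\cap\{\rho>\lambda\})\,d\lambda<\int_0^\infty\min\Bigl\{m,\frac{1}{4\lambda^2}\Bigr\}\,d\lambda=\sqrt{m}.
\]
The weak bound splits over $(0,\tfrac12]$ and $(\tfrac12,1)$, each half contributing less than $1/(8\lambda^2)$. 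This follows from two pointwise inequalities:
\[
\rho(x)^2W(x)<\tfrac18 \ \text{ on } (0,\tfrac12], \qquad \rho(x)^2\bigl(W(1)-W(x)\bigr)<\tfrac18 \ \text{ on } [\tfrac12,1).
\]
Indeed, the left-half points with $\rho>\lambda$ lie in the initial interval $\{W<1/(8\lambda^2)\}$, and the right-half ones lie in a terminal interval. Since $\rho^2=12P/(x^2(1-x)(2x+1)^2)$ is rational and $W$ is a polynomial, these are one-variable polynomial sign conditions. The paper checks them by positivity of Bernstein coefficients, and no unimodality of $\rho$ is needed.

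The left inequality is asymptotically sharp as $x\to0$, since $\rho^2\sim12/x^2$ and $W\sim x^2/96$. This is exactly your factor-$2$ observation: the left tail alone would give the ratio $1/2$, and the paper spends that factor $2$ absorbing the right tail through a crude union bound.
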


\begin{proof}
By Lemma~\ref{lem:kernel-properties}(iii),
\[
\kappa_-(x)>0,
\qquad 0<x<1.
\]
Define the finite measure $\mu$ on $(0,1)$ by
\[
d\mu(x)=\kappa_-(x)\,dx,
\]
and set
\[
\rho(x)
=
\frac{\kappa_+(x)}{\kappa_-(x)},
\qquad 0<x<1.
\]
By \eqref{eq:kappa-pm},
\[
\rho(x)\geq0,
\qquad 0<x<1.
\]
Moreover,
\begin{equation}\label{eq:q-mu-identities}
\int_0^1 \kappa_+(x)h(x)\,dx
=
\int_0^1 \rho(x)h(x)\,d\mu(x),
\qquad
\int_0^1 \kappa_-(x)h(x)\,dx
=
\int_0^1 h(x)\,d\mu(x).
\end{equation}

We first establish a weak-$L^2(\mu)$ estimate for $\rho$. By
Lemma~\ref{lem:kernel-properties}(iii) and (iv),
\begin{equation}\label{eq:q-square}
\rho(x)^2
=
\frac{12P(x)}
{x^2(1-x)(2x+1)^2},
\qquad 0<x<1.
\end{equation}
Set
\[
W(x)
=
\mu((0,x))
=
\int_0^x \kappa_-(s)\,ds,
\qquad 0\leq x\leq 1.
\]
Using Lemma~\ref{lem:kernel-properties}(iii), a direct integration gives
\begin{equation}\label{eq:W}
W(x)
=
\frac{x^2(8x^3-15x^2+10)}{960},
\qquad 0\leq x\leq 1,
\end{equation}
and, in particular,
\begin{equation}\label{eq:W1}
W(1)=\frac1{320}.
\end{equation}

We claim that
\begin{equation}\label{eq:q-tail-left}
\rho(x)^2W(x)
<
\frac18,
\qquad
0<x\leq\frac12,
\end{equation}
and
\begin{equation}\label{eq:q-tail-right}
\rho(x)^2\bigl(W(1)-W(x)\bigr)
<
\frac18,
\qquad
\frac12\leq x<1.
\end{equation}

Indeed, by \eqref{eq:P}, \eqref{eq:q-square}, and \eqref{eq:W},
\begin{equation}\label{eq:diff}
\frac18-\rho(x)^2W(x)
=
\frac{3x^2R_-(x)}
{80(x-1)(2x+1)^2},
\qquad
0<x\leq\frac12,
\end{equation}
where
\[
R_-(x)
=
40x^6+45x^5-145x^4-180x^3+308x^2+101x-105.
\]
For integers $0\leq i\leq j$, let
\[
B_{i,j}(s)
=
\binom{j}{i}s^i(1-s)^{j-i},
\qquad
0\leq s\leq1.
\]
A direct expansion in the Bernstein basis gives
\[
-R_-\left(\frac{s}{2}\right)
=
\sum_{i=0}^6 c_iB_{i,6}(s),
\qquad
0\leq s\leq1,
\]
where
\[
(c_0,\ldots,c_6)
=
\left(
105,\frac{1159}{12},\frac{2491}{30},\frac{2619}{40},
\frac{3651}{80},\frac{4919}{192},\frac{225}{32}
\right).
\]
All these coefficients are  positive, and hence
\[
R_-(x)<0,
\qquad
0\leq x\leq\frac12.
\]
Together with \eqref{eq:diff}, this yields \eqref{eq:q-tail-left}.

Similarly, using \eqref{eq:q-square}, \eqref{eq:W}, and
\eqref{eq:W1},
\begin{equation}\label{eq:diff2}
\frac18
-
\rho(x)^2\bigl(W(1)-W(x)\bigr)
=
-\frac{R_+(x)}
{80x^2(2x+1)^2},
\qquad
\frac12\leq x<1,
\end{equation}
where
\[
R_+(x)
=
120x^9+255x^8-180x^7-720x^6+204x^5
+382x^4-68x^3-98x^2+12x+3.
\]
A direct expansion in the Bernstein basis gives
\[
-R_+\left(\frac{1+s}{2}\right)
=
\sum_{i=0}^9 d_iB_{i,9}(s),
\qquad
0\leq s\leq1,
\]
where
\[
(d_0,\ldots,d_9)
=
\Bigg(
\frac{1325}{256},
\frac{4691}{768},
\frac{8269}{1152},
\frac{6019}{672},
\frac{1577}{126},
\frac{2453}{126},
\frac{1775}{56},
\frac{3625}{72},
\frac{220}{3},
90
\Bigg).
\]
Again all coefficients are  positive. Thus
\[
R_+(x)<0,
\qquad
\frac12\leq x\leq1.
\]
Together with \eqref{eq:diff2}, this yields
\eqref{eq:q-tail-right}.

We claim that
\begin{equation}\label{eq:q-weak-L2}
\mu\bigl(\{x\in(0,1):\rho(x)>\lambda\}\bigr)
<
\frac{1}{4\lambda^2},
\qquad
\lambda>0.
\end{equation}
For $\lambda>0$, set
\[
c_\lambda=\frac{1}{8\lambda^2},
\]
and let
\[
E_\lambda^-
=
\left\{
0<x\leq\frac12:\rho(x)>\lambda
\right\}.
\]
By \eqref{eq:q-tail-left},
\[
W(x)<c_\lambda,
\qquad
x\in E_\lambda^-,
\]
and hence
\begin{equation}\label{incl1}
E_\lambda^-
\subset
\left\{
0<x\leq\frac12:
W(x)<c_\lambda
\right\}.
\end{equation}

If
\[
c_\lambda>W\left(\frac12\right),
\]
then, since $W$ is strictly increasing,
\[
\left\{
0<x\leq\frac12:
W(x)<c_\lambda
\right\}
=
\left(0,\frac12\right],
\]
and therefore
\[
\mu(E_\lambda^-)
\leq
W\left(\frac12\right)
<
c_\lambda.
\]

Suppose now that
\[
0<c_\lambda\leq W\left(\frac12\right).
\]
Since $W$ is continuous and strictly increasing, with $W(0)=0$, there
exists a unique $\xi\in(0,\frac12]$ such that
\[
W(\xi)=c_\lambda.
\]
Thus, by \eqref{incl1},
\[
E_\lambda^-\subset(0,\xi).
\]
Moreover, \eqref{eq:q-tail-left} gives
\[
\rho(\xi)^2W(\xi)<\frac18,
\]
and hence
\[
\rho(\xi)<\lambda.
\]
Since $\rho$ is continuous on $(0,1)$, there exists
$0<\delta<\xi$ such that
\[
\rho(x)<\lambda,
\qquad
\xi-\delta\leq x\leq\xi.
\]
Consequently,
\[
E_\lambda^-\subset(0,\xi-\delta),
\]
and therefore
\[
\mu(E_\lambda^-)
\leq
W(\xi-\delta)
<
W(\xi)
=
c_\lambda.
\]
Thus, in either case,
\begin{equation}\label{eq:weak-left}
\mu(E_\lambda^-)
<
c_\lambda.
\end{equation}

Likewise, let
\[
E_\lambda^+
=
\left\{
\frac12<x<1:\rho(x)>\lambda
\right\}.
\]
By \eqref{eq:q-tail-right},
\[
W(1)-W(x)
<
c_\lambda,
\qquad x\in E_\lambda^+,
\]
and hence
\[
E_\lambda^+
\subset
\left\{
\frac12<x<1:
W(1)-W(x)<c_\lambda
\right\}.
\]
Since $x\mapsto W(1)-W(x)$ is continuous and strictly decreasing on
$[\frac12,1]$, there exists $\xi\in[\frac12,1)$ such that
\[
\left\{
\frac12<x<1:
W(1)-W(x)<c_\lambda
\right\}
=
(\xi,1),
\]
with
\[
W(1)-W(\xi)\leq c_\lambda.
\]
Therefore,
\begin{equation}\label{eq:weak-right}
\begin{aligned}
\mu(E_\lambda^+)
&\leq
\mu((\xi,1))\\
&=
W(1)-W(\xi)\\
&\leq
c_\lambda.
\end{aligned}
\end{equation}
Since
\[
\{x\in(0,1):\rho(x)>\lambda\}
=
E_\lambda^-\cup E_\lambda^+,
\]
and the union is disjoint, \eqref{eq:weak-left} and
\eqref{eq:weak-right} give
\[
\begin{aligned}
\mu\bigl(\{x\in(0,1):\rho(x)>\lambda\}\bigr)
&=
\mu(E_\lambda^-)+\mu(E_\lambda^+)\\
&<
2c_\lambda\\
&=
\frac{1}{4\lambda^2}.
\end{aligned}
\]
This proves \eqref{eq:q-weak-L2}.

Now let $E\subset(0,1)$ be $\mu$-measurable with
\[
m=\mu(E)>0.
\]
Set
\[
\lambda_0=\frac{1}{2\sqrt{m}}.
\]
By \eqref{eq:q-weak-L2},
\[
\mu\bigl(E\cap\{\rho>\lambda\}\bigr)
<
\frac{1}{4\lambda^2},
\qquad
\lambda>\lambda_0,
\]
and hence
\[
\int_{\lambda_0}^{+\infty}
\mu\bigl(E\cap\{\rho>\lambda\}\bigr)\,d\lambda
<
\frac{1}{4\lambda_0}.
\]
On the other hand,
\[
\mu\bigl(E\cap\{\rho>\lambda\}\bigr)
\leq m,
\qquad
0<\lambda\leq\lambda_0,
\]
so
\[
\int_0^{\lambda_0}
\mu\bigl(E\cap\{\rho>\lambda\}\bigr)\,d\lambda
\leq
m\lambda_0.
\]
Therefore, by the layer-cake representation,
\[
\begin{aligned}
\int_E\rho\,d\mu
&=
\int_0^{+\infty}
\mu\bigl(E\cap\{\rho>\lambda\}\bigr)\,d\lambda\\
&<
m\lambda_0+\frac{1}{4\lambda_0}\\
&=
\sqrt{m}.
\end{aligned}
\]
Thus
\begin{equation}\label{eq:q-set-estimate}
\left(
\int_E \rho\,d\mu
\right)^2
<
\mu(E)
\end{equation}
for every $\mu$-measurable $E\subset(0,1)$ with $\mu(E)>0$.

Finally, for $0<t<1$, set
\[
E_t
=
\{x\in(0,1):h(x)>t\}.
\]
Since $\|h\|_{L^\infty(0,1)}>0$, there exists
\[
0<\varepsilon<\|h\|_{L^\infty(0,1)}
\]
such that
\[
|E_\varepsilon|>0.
\]
Since $\kappa_->0$ on $(0,1)$,
\[
\mu(E_\varepsilon)>0.
\]
Moreover,
\[
E_\varepsilon\subset E_t,
\qquad
0<t<\varepsilon,
\]
and hence
\[
\mu(E_t)>0,
\qquad
0<t<\varepsilon.
\]
By \eqref{eq:q-set-estimate},
\[
\int_{E_t}\rho\,d\mu
<
\mu(E_t)^{1/2},
\qquad
0<t<\varepsilon,
\]
whereas, for every $0<t<1$,
\[
\int_{E_t}\rho\,d\mu
\leq
\mu(E_t)^{1/2}.
\]
Consequently, by the layer-cake representation and the
Cauchy--Schwarz inequality,
\[
\begin{aligned}
\int_0^1 \rho(x)h(x)\,d\mu(x)
&=
\int_0^1
\left(
\int_{E_t}\rho(x)\,d\mu(x)
\right)dt\\
&<
\int_0^1\mu(E_t)^{1/2}\,dt\\
&\leq
\left(
\int_0^1\mu(E_t)\,dt
\right)^{1/2}\\
&=
\left(
\int_0^1 h(x)\,d\mu(x)
\right)^{1/2}.
\end{aligned}
\]
Hence
\[
\left(
\int_0^1 \rho(x)h(x)\,d\mu(x)
\right)^2
<
\int_0^1 h(x)\,d\mu(x).
\]
Together with \eqref{eq:q-mu-identities}, this yields
\eqref{est-L2.7}.
\end{proof}

\begin{Cor}\label{cor:even-component}
Let $b\in L^\infty(0,1)$ satisfy
\[
\|b\|_{L^\infty(0,1)}>0.
\]
Then
\[
\left\|
\int_0^1 b(x)\bigl(r_x+r_{-x}\bigr)\,dx
\right\|_{L^2(-1,1)}^2
<
2\|b\|_{L^\infty(0,1)}
\int_0^1 \kappa_-(x)|b(x)|\,dx.
\]
\end{Cor}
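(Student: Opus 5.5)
The plan is to bound the squared norm crudely by the triangle inequality in $L^2(-1,1)$, and then invoke Lemma~\ref{lem:even-estimate} with a suitable normalized function $h$.

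Set $M=\|b\|_{L^\infty(0,1)}>0$ and $h=|b|/M$. Then $0\leq h\leq1$ almost everywhere and $\|h\|_{L^\infty(0,1)}=1>0$, so Lemma~\ref{lem:even-estimate} applies to $h$.

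First, by the Minkowski inequality for Bochner integrals and the definition \eqref{eq:kappa-pm} of $\kappa_+$,
\[
\left\|\int_0^1 b(x)\bigl(r_x+r_{-x}\bigr)\,dx\right\|_{L^2(-1,1)}
\leq
\int_0^1 |b(x)|\,\|r_x+r_{-x}\|_{L^2(-1,1)}\,dx
=
\sqrt2\,M\int_0^1\kappa_+(x)h(x)\,dx .
\]
The integrand is measurable and bounded, since $\kappa_+(x)^2=\frac{(1-x)^3}{192}P(x)$ is continuous by Lemma~\ref{lem:kernel-properties}(iv).

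Next, squaring and applying \eqref{est-L2.7} gives
\[
\left\|\int_0^1 b(x)\bigl(r_x+r_{-x}\bigr)\,dx\right\|_{L^2(-1,1)}^2
\leq
2M^2\Bigl(\int_0^1\kappa_+h\,dx\Bigr)^2
<
2M^2\int_0^1\kappa_-h\,dx .
\]
Finally, substituting $h=|b|/M$ turns the right-hand side into $2M\int_0^1\kappa_-(x)|b(x)|\,dx$, which is exactly the claimed bound. The inequality is strict because \eqref{est-L2.7} is strict and $M>0$.

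There is no real obstacle here, since the substantive work was done in Lemma~\ref{lem:even-estimate}. The only points to check are the normalization of $h$ and that the triangle inequality for Bochner integrals is legitimate. The latter holds because $x\mapsto r_x$ is continuous into $L^2(-1,1)$ and bounded by \eqref{est:norm-rx}.
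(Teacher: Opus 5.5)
Your proposal is correct and follows essentially the same route as the paper: normalize $h=|b|/M$, apply Lemma~\ref{lem:even-estimate} to get the strict inequality, and combine it with the triangle inequality for the Bochner integral together with $\|r_x+r_{-x}\|_{L^2(-1,1)}=\sqrt2\,\kappa_+(x)$. Your extra remarks on the continuity of $x\mapsto r_x$ and on the measurability of the integrand are valid and slightly more careful than the paper's write-up.
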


\begin{proof}
Set
\[
M=\|b\|_{L^\infty(0,1)},
\qquad
h(x)=\frac{|b(x)|}{M}.
\]
Then
\[
0\leq h(x)\leq1
\qquad\text{for almost every }x\in(0,1),
\]
and
\[
\|h\|_{L^\infty(0,1)}=1.
\]
By Lemma~\ref{lem:even-estimate},
\[
\left(
\int_0^1 \kappa_+(x)|b(x)|\,dx
\right)^2
<
M
\int_0^1 \kappa_-(x)|b(x)|\,dx.
\]
On the other hand, by the triangle inequality for Bochner integrals and
\eqref{eq:kappa-pm},
\[
\begin{aligned}
\left\|
\int_0^1 b(x)\bigl(r_x+r_{-x}\bigr)\,dx
\right\|_{L^2(-1,1)}
&\leq
\int_0^1
|b(x)|\,\|r_x+r_{-x}\|_{L^2(-1,1)}\,dx\\
&=
\sqrt2
\int_0^1 \kappa_+(x)|b(x)|\,dx.
\end{aligned}
\]
Consequently,
\[
\begin{aligned}
\left\|
\int_0^1 b(x)\bigl(r_x+r_{-x}\bigr)\,dx
\right\|_{L^2(-1,1)}^2
&\leq
2
\left(
\int_0^1 \kappa_+(x)|b(x)|\,dx
\right)^2\\
&<
2M
\int_0^1 \kappa_-(x)|b(x)|\,dx.
\end{aligned}
\]
This proves the corollary.
\end{proof}

\section{Proof of Theorem~\ref{thm:main}}\label{sec3}

\begin{proof}
\noindent\textbf{Step 1: The sharp constant.}

Let $g\in L^\infty(-1,1)$ satisfy
\[
\|g\|_{L^\infty(-1,1)}\leq1.
\]
By \eqref{eq:ag-bg}, for almost every $x\in(0,1)$,
\begin{equation}\label{eq:est-mod-ag-bg}
|a_g(x)|+|b_g(x)|
=
\max\{|g(x)|,|g(-x)|\}
\leq1.
\end{equation}

By Lemma~\ref{lem:adjoint-decomposition}(ii), Lemma~\ref{lem:odd-estimate}, and Corollary~\ref{cor:even-component}, with the case $b_g=0$ being trivial, 
\begin{align}
&\|A^*g\|_{L^2(-1,1)}^2 =\left\|
\int_0^1
a_g(x)\bigl(r_x-r_{-x}\bigr)\,dx
\right\|_{L^2(-1,1)}^2
+
\left\|
\int_0^1
b_g(x)\bigl(r_x+r_{-x}\bigr)\,dx
\right\|_{L^2(-1,1)}^2, \label{eq:conseq1}\\
&\left\|
\int_0^1 a_g(x)\bigl(r_x-r_{-x}\bigr)\,dx
\right\|_{L^2(-1,1)}^2
\leq
2\int_0^1 \kappa_-(x)a_g(x)^2\,dx,\label{eq:conseq2}\\
&\left\|
\int_0^1 b_g(x)\bigl(r_x+r_{-x}\bigr)\,dx
\right\|_{L^2(-1,1)}^2
\leq 
2\|b_g\|_{L^\infty(0,1)}
\int_0^1 \kappa_-(x)|b_g(x)|\,dx. \label{eq:conseq3}
\end{align}
Together with \eqref{eq:est-mod-ag-bg}, this yields
\begin{equation}\label{eq:Astar-chain}
\begin{aligned}
\|A^*g\|_{L^2(-1,1)}^2 &\leq
2\int_0^1 \kappa_-(x)a_g(x)^2\,dx
+
2\|b_g\|_{L^\infty(0,1)}
\int_0^1 \kappa_-(x)|b_g(x)|\,dx\\
&\leq
2\int_0^1
\kappa_-(x)
\left(
a_g(x)^2+|b_g(x)|
\right)\,dx\\
&\leq
2\int_0^1
\kappa_-(x)
\left(
|a_g(x)|+|b_g(x)|
\right)\,dx\\
&\leq
2\int_0^1 \kappa_-(x)\,dx.
\end{aligned}
\end{equation}
Using Lemma~\ref{lem:kernel-properties}(iii), a direct integration gives
\begin{equation}\label{eq:value-int-k-minus}
\int_0^1 \kappa_-(x)\,dx
=
\frac1{320}.
\end{equation}
Therefore,
\[
\|A^*g\|_{L^2(-1,1)}^2
\leq
\frac1{160},
\]
and hence, by \eqref{eq:norms},
\begin{equation}\label{eq:Astar-upper}
\|A^*\|_{\infty\to2}
\leq
\frac{1}{4\sqrt{10}}.
\end{equation}

Now let
\begin{equation}\label{eq:def-g0}
g_0(t)=\operatorname{sgn}(t),
\qquad -1<t<1.
\end{equation}
Then
\[
\|g_0\|_{L^\infty(-1,1)}=1,
\qquad
a_{g_0}(x)=1,
\qquad
b_{g_0}(x)=0,
\qquad
0<x<1.
\]
Since $a_{g_0}$ is constant, the equality case in
Lemma~\ref{lem:odd-estimate}, together with
Lemma~\ref{lem:adjoint-decomposition}(ii) and
\eqref{eq:value-int-k-minus}, gives
\[
\|A^*g_0\|_{L^2(-1,1)}^2
=
2\int_0^1\kappa_-(x)\,dx
=
\frac1{160}.
\]
Thus equality is attained in \eqref{eq:Astar-upper}, and
\[
\|A^*\|_{\infty\to2}
=
\frac{1}{4\sqrt{10}}.
\]

By Lemma~\ref{lem:reduction}(ii),
\[
C_*
=
\frac{1}{4\sqrt{10}},
\]
and Lemma~\ref{lem:reduction}(iv) therefore yields
\[
\lambda_3(3,1,2,1)
=
\frac{C_*}{4\sqrt2}
=
\frac{1}{32\sqrt5}.
\]
This proves \eqref{eq:sharp-cst}.

\medskip
\noindent\textbf{Step 2: Extremal functions for
$\lambda_3(3,1,2,1)$.}

We first determine all functions in the closed unit ball of
$L^\infty(-1,1)$ at which the norm $\|A^*\|_{\infty\to2}$ is attained.

Let $g\in L^\infty(-1,1)$ satisfy
\[
\|g\|_{L^\infty(-1,1)}\leq1
\]
and
\begin{equation}\label{eq:value-norm-Astarg}
\|A^*g\|_{L^2(-1,1)}
=
\|A^*\|_{\infty\to2}
=
\frac{1}{4\sqrt{10}}.
\end{equation}
Then equality holds throughout \eqref{eq:Astar-chain}. In view of
\eqref{eq:conseq1}, \eqref{eq:conseq2}, and \eqref{eq:conseq3}, this yields
\begin{equation}\label{eq:conseq1-equality}
\left\|
\int_0^1 a_g(x)\bigl(r_x-r_{-x}\bigr)\,dx
\right\|_{L^2(-1,1)}^2
=
2\int_0^1 \kappa_-(x)a_g(x)^2\,dx
\end{equation}
and
\begin{equation}\label{eq:conseq2-equality}
\left\|
\int_0^1 b_g(x)\bigl(r_x+r_{-x}\bigr)\,dx
\right\|_{L^2(-1,1)}^2
=
2\|b_g\|_{L^\infty(0,1)}
\int_0^1 \kappa_-(x)|b_g(x)|\,dx.
\end{equation}
By \eqref{eq:conseq2-equality} and
Corollary~\ref{cor:even-component},
\begin{equation}\label{b-g-zero}
b_g(x)=0
\qquad\text{for almost every }x\in(0,1).
\end{equation}
Hence, by Lemma~\ref{lem:adjoint-decomposition}(ii),
\begin{equation}\label{eq:norm-simplified}
\|A^*g\|_{L^2(-1,1)}^2
=
\left\|
\int_0^1
a_g(x)\bigl(r_x-r_{-x}\bigr)\,dx
\right\|_{L^2(-1,1)}^2.
\end{equation}
Moreover, by \eqref{eq:conseq1-equality} and
Lemma~\ref{lem:odd-estimate}, $a_g$ is constant almost everywhere on
$(0,1)$. Thus, for some $c\in\mathbb R$,
\begin{equation}\label{eq:ag-cosntant}
a_g(x)=c
\qquad\text{for almost every }x\in(0,1).
\end{equation}
Together with \eqref{eq:value-int-k-minus},
\eqref{eq:value-norm-Astarg}, \eqref{eq:conseq1-equality}, and
\eqref{eq:norm-simplified}, this gives
\[
\|A^*g\|_{L^2(-1,1)}^2
=
2c^2\int_0^1\kappa_-(x)\,dx
=
\frac{c^2}{160}
=
\frac1{160}.
\]
It follows that
\begin{equation}\label{eq:value-c}
c=\pm1.
\end{equation}
By \eqref{eq:ag-bg}, \eqref{b-g-zero},
\eqref{eq:ag-cosntant}, and \eqref{eq:value-c}, we conclude that
\[
g(t)=\pm g_0(t)
\qquad\text{for almost every }t\in(-1,1),
\]
where $g_0$ is defined by \eqref{eq:def-g0}.

Combining this characterization with Step~1, we conclude that the norm
of $A^*$ on the closed unit ball of $L^\infty(-1,1)$ is attained
exactly at $\pm g_0$. In particular, by Lemma~\ref{lem:reduction}(ii),
\begin{equation}\label{norm-A-starg-zero-L2}
\|A^*\|_{\infty\to2}
=
\|A^*g_0\|_{L^2(-1,1)}
=
C_*.
\end{equation}

We next compute $A^*g_0$ explicitly. By \eqref{eq:rx},
\eqref{eq:A-star}, and \eqref{eq:def-g0},
\begin{equation}\label{eq:Astar-g-0-proj}
A^*g_0
=
(I-\Pi_2)F,
\end{equation}
where
\[
F(t)
=
\int_{-1}^1
\operatorname{sgn}(x)(x-t)_+\,dx,
\qquad -1<t<1.
\]
A direct computation gives
\begin{equation}\label{eq:def-functionF}
F(t)
=
\frac12-t+\frac12t|t|,
\qquad -1<t<1.
\end{equation}
Since the even part of $F$ is the constant $1/2$, while the odd
subspace of $\mathcal P_2$ is spanned by $t$, we have
\[
\Pi_2F(t)
=
\frac12
+
\frac{\langle -t+\frac12t|t|,t\rangle_{L^2(-1,1)}}
{\|t\|_{L^2(-1,1)}^2}\,t.
\]
Moreover,
\[
\|t\|_{L^2(-1,1)}^2=\frac23,
\qquad
\left\langle \frac12t|t|,t\right\rangle_{L^2(-1,1)}
=
\frac12\int_{-1}^1 t^2|t|\,dt
=
\frac14.
\]
Hence
\[
\Pi_2F(t)
=
\frac12-\frac58t,
\qquad -1<t<1.
\]
Together with \eqref{eq:Astar-g-0-proj} and
\eqref{eq:def-functionF}, this yields
\begin{equation}\label{eq:f0}
f_0(t)
:=
(A^*g_0)(t)
=
\frac12t|t|-\frac38t
=
\frac{t(4|t|-3)}{8},
\qquad -1<t<1.
\end{equation}

Define
\begin{equation}\label{eq:V0}
V_0(t)
=
\frac{t(1-|t|)^2(1+2|t|)}{48},
\qquad -1\leq t\leq1.
\end{equation}
The function $V_0$ is odd,
\[
V_0(\pm1)=V_0'(\pm1)=0,
\]
and a direct differentiation gives
\begin{equation}\label{derv-2-V0}
V_0''=f_0,
\end{equation}
where $f_0$ is defined by \eqref{eq:f0}. Hence
$V_0\in H_0^2(-1,1)$, and its oddness yields
\[
\int_{-1}^1V_0(t)\,dt=0.
\]
Therefore,
\[
V_0\in X.
\]
By Lemma~\ref{lem:reduction}(i),
\begin{equation}\label{eq:appartenance}
f_0\in\mathcal P_2^\perp
\qquad\text{and}\qquad
V_0=Af_0.
\end{equation}
Moreover, by \eqref{eq:def-g0} and \eqref{eq:V0},
\begin{equation}\label{eq:sign-V0}
\operatorname{sgn}(V_0(t))
=
\operatorname{sgn}(t)
=
g_0(t),
\qquad -1<t<1.
\end{equation}
Together with \eqref{eq:f0} and \eqref{eq:appartenance}, this yields
\begin{equation}\label{normV0L1}
\begin{aligned}
\|V_0\|_{L^1(-1,1)}
&=
\int_{-1}^1g_0(t)V_0(t)\,dt\\
&=
\int_{-1}^1g_0(t)(Af_0)(t)\,dt\\
&=
\langle A^*g_0,f_0\rangle_{L^2(-1,1)}\\
&=
\|f_0\|_{L^2(-1,1)}^2.
\end{aligned}
\end{equation}
Hence, by \eqref{norm-A-starg-zero-L2}, \eqref{eq:f0},
\eqref{derv-2-V0}, and \eqref{normV0L1},
\[
\frac{\|V_0\|_{L^1(-1,1)}}
{\|V_0''\|_{L^2(-1,1)}}
=
\|f_0\|_{L^2(-1,1)}
=
\|A^*g_0\|_{L^2(-1,1)}
=
C_*.
\]
Thus $V_0$ is an extremal function for \eqref{eq:Cstar}.

We now prove that the extremal functions for \eqref{eq:Cstar} are unique
up to multiplication by a nonzero constant.

Let $V\in X\setminus\{0\}$ be an arbitrary extremal function for
\eqref{eq:Cstar}, and set
\[
f=V''.
\]
By Lemma~\ref{lem:reduction}(i),
\[
f\in\mathcal P_2^\perp,
\qquad
V=Af.
\]
Set
\[
g(t)=\operatorname{sgn}(V(t)),
\qquad -1<t<1.
\]
Then
\[
\|g\|_{L^\infty(-1,1)}=1
\]
and
\[
\|V\|_{L^1(-1,1)}
=
\int_{-1}^1g(t)V(t)\,dt.
\]
Since $V$ is extremal, by \eqref{eq:Cstar}, \eqref{eq:norms}, and
Lemma~\ref{lem:reduction}(ii),
\[
\begin{aligned}
C_*\|f\|_{L^2(-1,1)}
&=
\|V\|_{L^1(-1,1)}\\
&=
\int_{-1}^1g(t)(Af)(t)\,dt\\
&=
\langle A^*g,f\rangle_{L^2(-1,1)}\\
&\leq
\|A^*g\|_{L^2(-1,1)}
\|f\|_{L^2(-1,1)}\\
&\leq
\|A^*\|_{\infty\to2}
\|g\|_{L^\infty(-1,1)}
\|f\|_{L^2(-1,1)}\\
&=
C_*\|f\|_{L^2(-1,1)}.
\end{aligned}
\]
Since $f\neq0$, all the inequalities above are equalities. Consequently,
\begin{equation}\label{eq:equality-Cauchy-Schwarz}
\langle A^*g,f\rangle_{L^2(-1,1)}
=
\|A^*g\|_{L^2(-1,1)}
\|f\|_{L^2(-1,1)}
\end{equation}
and
\begin{equation}\label{eq:norm-Astarg-Cstar}
\|A^*g\|_{L^2(-1,1)}
=
C_*.
\end{equation}
By \eqref{eq:norm-Astarg-Cstar} and the characterization above,
\[
g=\pm g_0,
\]
and therefore, by \eqref{eq:f0},
\[
A^*g=\pm f_0.
\]
Moreover, by \eqref{eq:equality-Cauchy-Schwarz}, $f$ and $A^*g$ are
linearly dependent. Hence
\[
f=cf_0
\]
for some $c\neq0$. Since $V=Af$ and, by \eqref{eq:appartenance},
$V_0=Af_0$, the linearity of $A$ gives
\[
V=cV_0.
\]
Thus the extremal functions for \eqref{eq:Cstar} are precisely the
nonzero scalar multiples of $V_0$.

We now return to the original interval $(0,1)$. By
Lemma~\ref{lem:reduction}(iii), the inverse image of $V_0$ under $T$
is given by
\[
x\longmapsto
\int_0^x V_0(2s-1)\,ds.
\]
For convenience, we choose the normalization
\begin{equation}\label{eq:U-star}
U_*(x)
=
-120\int_0^x V_0(2s-1)\,ds,
\qquad 0\leq x\leq1.
\end{equation}
Then
\[
(TU_*)(t)
=
U_*'\left(\frac{t+1}{2}\right)
=
-120V_0(t),
\qquad -1\leq t\leq 1.
\]
Using \eqref{eq:V0}, a direct integration gives
\begin{equation}\label{eq:Ustar}
U_*(x)
=
\begin{cases}
x^3\bigl(16x^2-25x+10\bigr),
& 0\leq x\leq\frac12,
\\[1mm]
(1-x)^3
\bigl(16(1-x)^2-25(1-x)+10\bigr),
& \frac12\leq x\leq1.
\end{cases}
\end{equation}
By Lemma~\ref{lem:reduction}(iv) and the above characterization of the
extremal functions for \eqref{eq:Cstar}, the extremal functions for
$\lambda_3(3,1,2,1)$ are precisely the nonzero scalar multiples of
$U_*$. Moreover, \eqref{eq:Ustar} gives
\[
U_*(1-x)=U_*(x),
\qquad 0\leq x\leq1.
\]
Thus every extremal function for $\lambda_3(3,1,2,1)$ is symmetric
about $x=\frac12$.

\medskip
\noindent\textbf{Step 3: Extremal functions for
$\lambda_3(3,0,2,\infty)$.}

By \eqref{eq:lambda3}, 
\[
\lambda_3(3,0,2,\infty)
=
\sup_{U\in\mathring W_2^3(0,1)\setminus\{0\}}
\frac{\|U\|_{L^\infty(0,1)}}
{\|U'''\|_{L^2(0,1)}}.
\]
We first show that $U_*$ is an extremal function for
$\lambda_3(3,0,2,\infty)$.

Indeed, by \eqref{eq:sign-V0} and \eqref{eq:U-star},
\[
U_*'(x)
=
-120V_0(2x-1),
\qquad 0\leq x\leq1,
\]
and
\[
\operatorname{sgn}(V_0(t))
=
\operatorname{sgn}(t),
\qquad -1<t<1.
\]
Hence $U_*$ is increasing on $(0,\frac12)$ and decreasing on
$(\frac12,1)$. Together with
\[
U_*(0)=U_*(1)=0,
\]
this yields
\[
\|U_*\|_{L^\infty(0,1)}
=
U_*\left(\frac12\right)
\]
and
\begin{equation}\label{eq:equality-norms-U-star}
\begin{aligned}
\|U_*'\|_{L^1(0,1)}
&=
\int_0^{\frac12}U_*'(x)\,dx
-
\int_{\frac12}^1 U_*'(x)\,dx\\
&=
2U_*\left(\frac12\right)\\
&=
2\|U_*\|_{L^\infty(0,1)}.
\end{aligned}
\end{equation}
Since $U_*$ is an extremal function for $\lambda_3(3,1,2,1)$,
\eqref{eq:sharp-cst} gives
\[
\frac{\|U_*'\|_{L^1(0,1)}}
{\|U_*'''\|_{L^2(0,1)}}
=
\frac{1}{32\sqrt5}.
\]
Therefore, by \eqref{eq:constant-WatanabeEtAl} and
\eqref{eq:equality-norms-U-star},
\[
\frac{\|U_*\|_{L^\infty(0,1)}}
{\|U_*'''\|_{L^2(0,1)}}
=
\frac{1}{64\sqrt5}
=
\lambda_3(3,0,2,\infty).
\]
Thus $U_*$ is an extremal function for
$\lambda_3(3,0,2,\infty)$.

Let
\[
\widetilde{\mathcal P}_2
=
\operatorname{span}\{1,s,s^2\}
\subset L^2(0,1),
\]
and denote by
\[
\widetilde\Pi_2:L^2(0,1)\longrightarrow\widetilde{\mathcal P}_2
\]
the orthogonal projection onto $\widetilde{\mathcal P}_2$. For $x\in[0,1]$, set
\[
\sigma_x
=
(I-\widetilde\Pi_2)
\left(\frac{(x-\cdot)_+^2}{2}\right).
\]
A direct computation gives
\[
\|\sigma_x\|_{L^2(0,1)}^2
=
\frac{x^5(1-x)^5}{20},
\qquad 0\leq x\leq1.
\]
Consequently,
\begin{equation}\label{eq:max-norm-sigma}
\|\sigma_x\|_{L^2(0,1)}
\leq
\frac{1}{64\sqrt5},
\end{equation}
with equality if and only if $x=\frac12$.

Let $U\in\mathring W_2^3(0,1)\setminus\{0\}$ be an arbitrary
extremal function for $\lambda_3(3,0,2,\infty)$. Choose
$x_0\in(0,1)$ such that
\[
|U(x_0)|
=
\|U\|_{L^\infty(0,1)},
\]
and set 
\[
h=U'''.
\]
Then $h\neq0$ and 
\[
\frac{|U(x_0)|}
{\|h\|_{L^2(0,1)}}=\lambda_3(3,0,2,\infty)=\frac{1}{64\sqrt5}.
\]
The boundary conditions
\[
U(0)=U(1)=U'(0)=U'(1)=U''(0)=U''(1)=0
\]
and integration by parts give
\[
\int_0^1 h(s)\,ds
=
\int_0^1 sh(s)\,ds
=
\int_0^1 s^2h(s)\,ds
=
0.
\]
Hence
\[
h\perp\widetilde{\mathcal P}_2
\]
and 
\[
U(x)
=
\frac12\int_0^1(x-s)_+^2h(s)\,ds
=
\langle\sigma_x,h\rangle_{L^2(0,1)},
\qquad 0\leq x\leq1.
\]
Using \eqref{eq:max-norm-sigma}, we obtain 
\[
\begin{aligned}
\frac{1}{64\sqrt5}\|h\|_{L^2(0,1)}
&=
|U(x_0)|\\
&=
|\langle\sigma_{x_0},h\rangle_{L^2(0,1)}|\\
&\leq
\|\sigma_{x_0}\|_{L^2(0,1)}
\|h\|_{L^2(0,1)}\\
&\leq
\frac{1}{64\sqrt5}\|h\|_{L^2(0,1)}.
\end{aligned}
\]
Since $h\neq0$, equality holds throughout. Therefore, 
\[
\|\sigma_{x_0}\|_{L^2(0,1)}
=
\frac{1}{64\sqrt5},\qquad |\langle\sigma_{x_0},h\rangle_{L^2(0,1)}|=\|\sigma_{x_0}\|_{L^2(0,1)}
\|h\|_{L^2(0,1)}.
\]
Consequently, $x_0=\frac{1}{2}$ and  
\[
h=c\,\sigma_{1/2}
\]
for some $c\neq 0$.  

Applying the same argument to the extremal function $U_*$ gives
\[
U_*'''=c_*\,\sigma_{1/2}
\]
for some $c_*\neq0$. Hence
\[
U'''=c_0U_*''',\qquad c_0=\frac{c}{c_*}.  
\]
Since
\[
U-c_0U_*\in\mathring W_2^3(0,1)
\]
and
\[
(U-c_0U_*)'''=0,
\]
we obtain
\[
U=c_0U_*.
\]
Thus the extremal functions for $\lambda_3(3,0,2,\infty)$ are
precisely the nonzero scalar multiples of $U_*$. Combining this with Step~2, the families of extremal functions for
$\lambda_3(3,1,2,1)$ and $\lambda_3(3,0,2,\infty)$ coincide up to
multiplication by a nonzero constant. Moreover, 
every such extremal function is symmetric about $x=\frac12$.
This completes the proof.
 
\end{proof}

\end{document}